\newcommand{\bea}{\begin{eqnarray}}
\newcommand{\eea}{\end{eqnarray}}
\newcommand{\be}{\begin {equation}}
\newcommand{\ee}{\end{equation}}
\newtheorem{theorem}{Theorem}[section]
\newtheorem{corollary}[theorem]{Corollary}
\newtheorem{lemma}[theorem]{Lemma}
\newtheorem{remark}[theorem]{Remark}
\newtheorem{question}[theorem]{Question}
\begin{document}

\title{ $p$-adic L-functions and Classical Congruences}

\author {Xianzu Lin }

\date{ }
\maketitle
   {\small \it College of Mathematics and Computer Science, Fujian Normal University, }\\
    \   {\small \it Fuzhou, {\rm 350108}, China;}\\
      \              {\small \it Email: linxianzu@126.com}

\begin{abstract}
In this paper, using $p$-adic analysis and  $p$-adic L-functions,
we show how to extend classical congruences (due to Wilson, Gauss,
Dirichlet, Jacobi, Wolstenholme, Glaisher, Morley, Lemher and
other people) to modulo $p^k$ for any $k>0$.
\end{abstract}



Keywords:Congruence, $p$-adic L-function, $p$-adic logarithm,
Bernoulli number.


Mathematics Subject Classification 2010: Primary 11A07; secondary
11B68,11B65,11S40.

\section{Introduction}
Let $p$ be an odd prime.  A famous congruence due to Wilson (according to Waring) states that \cite[p.68]{HW}:
   \begin{equation}\label{fmain13}  (p-1)!\equiv-1\  (mod\ p).\end{equation}
  This congruence was first formulated by Waring in
  1770. The first proof was given by Lagrange in 1771. In 1828, Dirichlet proved another related congruence: \begin{equation} (\frac{p-1}{2})!\equiv(-1)^N\  (mod\ p),\end{equation}
  where $4|p-3$ and $N$ is the number of quadratic nonresidues less than $p/2$. In 1900 Glaisher \cite{gl1} extended Wilson's theorem as follow:
  \begin{equation}\label{fmain13} (p-1)!\equiv pB_{p-1}-p\  (mod\ p^2),\end{equation} where $B_n$ is the Bernoulli number.
In 2000,  Sun  \cite{sun7} went one step
further by showing
 \begin{equation}\label{fmain14} (p-1)!\equiv\frac{pB_{2p-2}}{2p-2}-\frac{pB_{p-1}}{p-1}-\frac{1}{2}(\frac{pB_{p-1}}{p-1})^2\  (mod\ p^3).\end{equation}

Assume that $p\equiv1(mod\ 4)$, then by Fermat's two square theorem, we have $p=a^2+b^2$, where $a$ can be uniquely determined by requiring $a\equiv1\ (mod\ 4)$.
 Another famous congruence, due to Gauss(1828) states that   \begin{equation} \label{fmain1fd} {(p-1)/2\choose (p-1)/4}\equiv2a\ (mod\ p).\end{equation}
The following extension of Gauss's congruence was first
conjectured by Beukers \cite{be} and proved by Chowla, Dwork,
Evans\cite{cde}:
 \begin{equation} \label{fmain1fd4} {(p-1)/2\choose (p-1)/4}\equiv\Big(1+\frac{1}{2}pq_p(2)\Big)\Big(2a-\frac{p}{2a}\Big)\ (mod\ p^2).\end{equation}
 In 1837, Jacobi proved a congruence analogous to (\ref{fmain1fd}): \begin{equation} \label{fmain1fd1} {2(p-1)/3\choose (p-1)/3}\equiv-r\ (mod\ p),\end{equation}
 where $p\equiv1(mod\ 6)$, $4p=r^2+27s^2$, and $r\equiv1(mod\ 3)$.
Evans and Yeung \cite{ye} independently extended Jacobi'
congruence to modulo $p^2$ as follow:
\begin{equation} \label{fmain1fd12} {2(p-1)/3\choose (p-1)/3}\equiv-r+\frac{p}{r}\ (mod\ p^2).\end{equation}
In 2010 Cosgrave and Dilcher further extended Gauss' and Jacobi'
congruences to modulo $p^3$ \cite{cd} as follows:
{\setlength{\arraycolsep}{0pt}
\begin{eqnarray}\label{fmain2655dsf}
&&{(p-1)/2\choose (p-1)/4}\equiv
 \Big(2a-\frac{p}{2a}-\frac{p^2}{8a^2}\Big)\cdot
\\
&&\Big(1+\frac{1}{2}q_p(2)p-\frac{1}{8}q_p(2)^2p^2+\frac{1}{4}E_{p-3}p^2\Big)
\ (mod\ p^3) \nonumber
\end{eqnarray}
}\begin{equation} \label{fmain1fd126} {2(p-1)/3\choose
(p-1)/3}\equiv\Big(-r+\frac{p}{r}+\frac{p^2}{r^3}\Big)\Big(1+\frac{1}{6}B_{p-2}(\frac{1}{3})p^2\Big)\
(mod\ p^3).\end{equation} Later, (\ref{fmain2655dsf}) and
(\ref{fmain1fd126}) were extended to cover similar binomial
coefficients by Al-Shaghay and Dilcher\cite{ad}.

In 1852, Wolstenholme proved his famous theorem\cite{wo} which
states that if $p\geq5$ is a prime, then
\begin{equation}\label{fmain1} \sum_{k=1}^{p-1}\frac{1}{k}\equiv0\  (mod\ p^2).\end{equation}

In the same paper, Wolstenholme also proved the following
congruence:
\begin{equation}\label{fmainn1} \sum_{k=1}^{p-1}\frac{1}{k^2}\equiv0\  (mod\ p).\end{equation}

In 1900, Glaisher  gave the following generalizations of
Wolstenholme's theorem \cite{gl1,gl2}:

\begin{equation}\sum_{k=1}^{p-1}\frac{1}{k^m}\equiv \begin{cases} \frac{m}{m+1}pB_{p-m-1}\  \  \  \ \  \ \  \  \ (mod\ p^2)& \text{if}\ m\ is\ even \\ & \\
 \frac{-m(m+1)}{2(m+2)}p^2B_{p-m-2}\  \  (mod\ p^3)& \text{if}\ m\ is\ odd \end{cases}\end{equation}

where $m>0$ and $p\geq m+3$,

Since then, the above congruences have been extended by several
authors  to modulo $p^k$ \cite{gl1,gl2,le,sun7,ta}, and the
multiple harmonic sums\cite{ho,ppt,ro,zhao3,zhao4,zhoucai}.

Another form of Wolstenholme's theorem, which can be easily deduced from (\ref{fmain1}) is that:

\begin{equation}\label{fmain2} {2p-1\choose p-1}\equiv1\  (mod\ p^3).\end{equation}

Many extensions of  (\ref{fmain2}) to modulo $p^k$ in terms of
Bernoulli numbers or harmonic sums have been obtained in
\cite{gl0,gl1,ht,me,ro,ta}. We refer the readers to \cite{me0} for
more references about various generalizations of Wolstenholme's
theorem.

Another classical congruence for binomial coefficient due to
Morley (1895) \cite{mo} is:
\begin{equation}\label{fmain3} {p-1\choose (p-1)/2}\equiv(-1)^{(p-1)/2}4^{p-1}\  (mod\ p^3).\end{equation}

For integer $a$ with $(a,p)=1$, set $q_p(a)=\frac{a^{p-1}-1}{p}$.
In 1938, Lemher \cite{le} proved the following four congruences:
\begin{equation}\label{fmain4y} \sum_{k=1}^{[p/2]}\frac{1}{k}\equiv-2q_p(2)+pq_p(2)^2\  (mod\ p^2);\end{equation}
\begin{equation}\label{fmain5y} \sum_{k=1}^{[p/3]}\frac{1}{p-3k}\equiv\frac{1}{2}q_p(3)-\frac{1}{4}pq_p(3)^2\  (mod\ p^2);\end{equation}
\begin{equation}\label{fmain6y} \sum_{k=1}^{[p/4]}\frac{1}{p-4k}\equiv\frac{3}{4}q_2-\frac{3}{8}pq_p(2)^2\  (mod\ p^2);\end{equation}
\begin{equation}\label{fmain47y} \sum_{k=1}^{[p/6]}\frac{1}{p-6k}\equiv\frac{1}{4}q_p(3)+\frac{1}{3}q_p(2)-\frac{1}{8}pq_p(3)^2-\frac{1}{6}pq_p(2)^2\big)\  (mod\ p^2),\end{equation}
and used them to derive congruences about ${p-1\choose [p/m]}$ for
$m=2, 3, 4$ or $6$. In\cite{ca,pan,sun7,sun8}, Morley's and
Lemher's congruences are extended to congruences for ${p-1\choose [p/m]}$ and
$\sum_{k=1}^{[p/m]}\frac{1}{k^n}$ modulo $p^k$.
\begin{remark}
Congruences (\ref{fmain4y}) and (\ref{fmain6y}) modulo $p$ were
given by Glaisher \cite{gl2}, while congruence (\ref{fmain4y}) and
(\ref{fmain5y}) modulo $p$ were given proved by Lerch \cite{ler}.
\end{remark}

There arises the following question:
\begin{question}Can we extend the above congruences to modulo $p^k$ for arbitrarily large $k$ ?\end{question}

The first breakthrough in this direction is due to Washington
\cite{wa}, who gave an explicit $p$-adic expansion of the sums
 \begin{equation}\label{fmain10}\sum_{k=1 \atop (k,np)=1}^{np}\frac{1}{k^m}\end{equation}
 in terms of $p$-adic $L$-functions. Washington' expansions, together with Kummer's congruences for $p$-adic $L$-functions,
 immediately imply mod $p^k$ evaluations of $\sum_{k=1}^{p-1}\frac{1}{k^m}$ for arbitrarily large $k$.

In this paper, we give $p$-adic expansions for the sums of
 Lemher's type
 \begin{equation}\label{fmain11}\sum_{k=1 \atop (k,np)=1}^{[np/r]}\frac{1}{k^m},\end{equation} where
 $(r,np)=1$. It turn outs that many binomial coefficients also admit nice expansions in terms of $p$-adic
$L$-functions. As applications,  we can extend all the congruences
mentioned above to modulo $p^k$ for arbitrarily large $k$.

This paper is structured as follows: In Section 2, we give
preliminaries that will be used throughout this paper. In Section
3, we give a review of Washington's
 $p$-adic expansion of the power sums and its applications. In Section 4, we give a similar  $p$-adic expansion
for the sums of Lemher's type and derive many corollaries. Sections 5 and 6 are devoted to extensions  of Gauss's and Jacobi's congruences, and Wilson's theorem respectively.

\section{preliminaries}
The Bernoulli numbers $B_n$ and the Bernoulli polynomials $B_n(x)$
are defined respectively by
\begin{equation}\label{fmain21} \frac{z}{e^z-1}=\sum_{n=0}^{\infty}B_nz^n/n!;\end{equation}
\begin{equation} \label{fmain22}\frac{ze^{xz}}{e^z-1}=\sum_{n=0}^{\infty}B_n(x)z^n/n!.\end{equation}
Thus $B_0(x)=1$, $B_1(x)=x-\tfrac{1}{2}$, $B_2(x)=x^2-x+\tfrac{1}{6}$, $B_3(x)=x^3-\tfrac{3}{2}x^2+\tfrac{1}{2}x$, $B_4(x)=x^4-2x^3+x^2-\tfrac{1}{30}$, etc.

From the above definitions, we have
\begin{equation} B_n(x)=\sum_{r=0}^{n}{n\choose r}B_rx^{n-r}.\end{equation}
In particular, $B_n(0)=B_n.$ Note that  $B_n=0$ whenever $n>1$ is
odd.

For a Dirichlet character $\chi$ modulo $m$, the generalized
Bernoulli numbers $B_{n,\chi}$ are defined by
\begin{equation}\label{fmain29} \sum_{a=1}^{m}\frac{\chi(a)ze^{az}}{e^{mz}-1}=\sum_{n=0}^{\infty}B_{n,\chi}z^n/n!.\end{equation}

 From the definitions, we have
\begin{equation}\label{fmain30} B_{n,\chi}=m^{n-1}\sum_{a=1}^{m}\chi(a)B_{n}(\tfrac{a}{m}).\end{equation}

For a Dirichlet character $\chi$ modulo $m$ and a positive integer
$d$, let $\chi'$ be  the character modulo $md$ induced by  $\chi$
. Then we have
\begin{equation}\label{fmain31}
B_{n,\chi'}=B_{n,\chi}\prod_{p\mid d, p\
prime}(1-\chi(p)p^{n-1}).\end{equation}

We need the  following  identity of power sums due to Szmidt,
Urbanowicz and Zagier \cite{suz}
   \begin{lemma}\label{lemma1}Let  $\chi$ be a Dirichlet character modulo $d$ and $N$ be a multiple of $d$. Let $m$ and $r$ be positive
   integers, with $(r,N)=1$. Then
$$mr^{m-1}\sum_{n=1}^{[N/r]}\chi(n)n^{m-1}=-B_{m,\chi}+\frac{\overline{\chi}(r)}{\varphi(r)}\sum_\psi\overline{\psi}(-N)B_{m,\chi\psi}(N).$$
   \end{lemma}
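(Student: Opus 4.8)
The plan is to prove the identity by expressing the power sum $\sum_{n=1}^{[N/r]}\chi(n)n^{m-1}$ in terms of generalized Bernoulli polynomials, and then using character-orthogonality over the residues modulo $r$ to isolate the relevant terms. First I would recall the classical formula relating a sum of consecutive values $n^{m-1}$ to Bernoulli polynomials, namely
\begin{equation*}
m\sum_{n=0}^{M-1} n^{m-1} = B_m(M) - B_m,
\end{equation*}
which follows directly from the generating-function definition \eqref{fmain22}. The truncation at $[N/r]$ together with the twist by $\chi(n)$ is what makes the sum nontrivial, so the core idea is to remove the truncation by a detour through a full period: I would run $n$ over a complete set of residues modulo $rd$ (or modulo $N$, since $d\mid N$), which lets me bring in the orthogonality relations for Dirichlet characters $\psi$ modulo $r$.

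The key computational device is the detection of the condition ``$rn \leq N$'' via characters modulo $r$. Since $(r,N)=1$, I would substitute and split the full sum $\sum_{n=1}^{N}\chi(n)n^{m-1}$, whose value is $-B_{m,\chi}$ by \eqref{fmain29}--\eqref{fmain30} (after the standard normalization $m\sum_{a=1}^{m}\chi(a)a^{m-1}=\ldots$), against the partial sum up to $[N/r]$. Concretely, the partial sum times $mr^{m-1}$ should be reorganized as a sum over all $\psi$ modulo $r$ by writing the indicator of an arithmetic progression mod $r$ as $\frac{1}{\varphi(r)}\sum_{\psi}\overline{\psi}(r)\cdots$; applying \eqref{fmain30} to each twisted sum $B_{m,\chi\psi}(N)$ produces exactly the terms $\overline{\psi}(-N)B_{m,\chi\psi}(N)$ appearing on the right-hand side. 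The factor $\overline{\chi}(r)/\varphi(r)$ emerges from the character orthogonality normalization combined with the multiplicativity $\chi(rn)=\chi(r)\chi(n)$, and the isolated term $-B_{m,\chi}$ is the principal ($\psi$ trivial) contribution after one regroups.

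The main obstacle I anticipate is bookkeeping the boundary and normalization factors correctly: tracking how the rescaling $n \mapsto rn$ interacts with the Bernoulli polynomial argument $N$ versus $0$, and ensuring the sign $\overline{\psi}(-N)$ and the power $r^{m-1}$ land in precisely the stated places. I would handle this by first verifying the identity in the simplest case $r=1$ (where the sum over $\psi$ collapses to the principal character and the formula must reduce to $m\sum_{n=1}^{N}\chi(n)n^{m-1}=-B_{m,\chi}+B_{m,\chi}(N)$, a known consequence of \eqref{fmain30}), and then tracking the additional $\psi$-components introduced by the truncation. Since this is precisely the result of Szmidt, Urbanowicz and Zagier~\cite{suz}, I would ultimately cite their derivation for the detailed orthogonality computation, presenting the character-detection step as the conceptual heart of the argument.
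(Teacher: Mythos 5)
The paper does not actually prove Lemma~\ref{lemma1}: it states the identity and attributes it to Szmidt, Urbanowicz and Zagier \cite{suz}, giving no argument of its own. Since you also end by deferring the detailed computation to \cite{suz}, your treatment matches the paper's in substance, and citing that reference is all the paper itself does.

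That said, the sketch you give of the underlying argument has a conceptual gap worth naming. You describe the key device as ``the detection of the condition $rn\leq N$ via characters modulo $r$,'' i.e.\ writing the indicator of the truncation as an orthogonality sum over $\psi$. An inequality is not a congruence condition, so no character sum over residues mod $r$ can produce the indicator of $\{n\leq [N/r]\}$; as written, that step fails. The actual mechanism in \cite{suz} runs in the opposite direction: one expands each $B_{m,\chi\psi}(N)$ on the right-hand side via the analogue of (\ref{fmain30}) as $(rd)^{m-1}\sum_a\chi\psi(a)B_m\big(\tfrac{N+a}{rd}\big)$, and the orthogonality sum $\sum_\psi\overline{\psi}(-N)\psi(a)$ selects the residue class $a\equiv -N\ (mod\ r)$. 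Substituting $a=rc-N$ (legitimate because $(r,N)=1$ and $d\mid N$, which also produces the factors $\chi(r)$ and $r^{m-1}$) converts the surviving terms into $d^{m-1}\sum_c\chi(c)B_m(c/d)$ over a window of $d$ consecutive integers starting near $[N/r]$; comparing this shifted window with the standard window $1\leq c\leq d$ by telescoping with $B_m(x+1)-B_m(x)=mx^{m-1}$ is what finally produces the truncated sum $m\sum_{c=1}^{[N/r]}\chi(c)c^{m-1}$. That telescoping comparison is the real heart of the proof and is absent from your outline. A smaller slip: you assert that $\sum_{n=1}^{N}\chi(n)n^{m-1}$ ``has value $-B_{m,\chi}$''; the correct statement, which you do recover in your $r=1$ check, is $m\sum_{n=1}^{N}\chi(n)n^{m-1}=B_{m,\chi}(N)-B_{m,\chi}$. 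None of this undermines your plan as a whole, since you ultimately lean on \cite{suz} exactly as the paper does, but as a standalone proof the sketch would not close without the residue-class selection and telescoping step.
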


Now we recall definition and basic properties of $p$-adic
$L$-functions and refer the readers to \cite{wa0} for more
details.

Throughout this paper, $p$ denotes an odd prime, and
$\mathbb{Z}_p$ and $\mathbb{Z}_p^*$ denote the ring of $p$-adic
integers and the group of invertible $p$-adic integers
respectively. The $p$-adic-valued Teichm\"{u}ller character
$\omega$ is defined as follows:

For an integer $a$ with $(a,p)=1$, $\omega(a)\in\mathbb{Z}_p$ is the
$p-1$-st root of unit satisfying $\omega(a)\equiv a  (mod\ p)$. Set $\langle a \rangle=\omega(a)^{-1}a $

The $p$-adic exponential and logarithm functions are defined
respectively by
 \begin{equation}\label{fmain311} exp(s)=\sum_{n=0}^{\infty}s^n/n!,\end{equation}
\begin{equation} \label{fmain312}log_p(1+s)=\sum_{n=0}^{\infty}(-1)^{n+1}s^n/n,\end{equation}
for $s\in p\mathbb{Z}_p$. As usual, we have $ exp(log_p(1+s))=1+s$
and $log_p(exp(s))=s$, and
\begin{equation}\label{fmain2659xsws}
log_p(1+s)+log_p(1+t)=log_p((1+s)(1+t)),\end{equation} for $s,t\in
p\mathbb{Z}_p$.

Let  $\chi$ be a primitive Dirichlet character modulo $d$ and let
$D$ be any multiple of $p$ and $d$. The $p$-adic $L$-function
$\chi$ is defined by:
\begin{equation}\label{fmain32} L_{p}(s,\chi)=\frac{1}{D}\frac{1}{s-1}\sum_{a=1 \atop (a,p)=1}^{D}\chi(a)\langle a \rangle^{1-s} \sum_{n=0}^{\infty}{1-s \choose n}(B_{n})(\frac{D}{a})^n,\end{equation}
where $s\in\mathbb{Z}_p$ and \begin{equation}\label{fmain3211} \langle a \rangle^{1-s}=exp((1-s)log_p(\langle a \rangle))= \sum_{n=0}^{\infty}{1-s \choose n}(\langle a \rangle-1)^n.\end{equation}

From the definition, we have, for $n\geq1$,
\begin{equation}\label{fmain321} L_{p}(1-n,\chi)=-(1-\chi \omega^{-n}(p)p^{n-1})\frac{B_{n,\chi \omega^{-n}}}{n}.\end{equation}
By (\ref{fmain321}) it is easy to see that $L_{p}(s,\chi)$ is
identically zero if $\chi(-1)=-1$. We note that $L_{p}(s,\chi)$ is analytic if $\chi\neq\textbf{1}$, and  $L_{p}(s,\textbf{1})$ is analytic except for a pole at $s=1$  with residue $(1-1/p)$.
\begin{lemma}\label{lemma49} $p(1-s)L_{p}(s,\textbf{1})\in\mathbb{Z}_p$ for $0\neq s\in \mathbb{Z}_p$. If $\chi$ is a nontrivial primitive Dirichlet character modulo $d$,  with $p^2\nmid d$, then $L_{p}(s,\chi)\in\mathbb{Z}_p$ for $s\in \mathbb{Z}_p$.
   \end{lemma}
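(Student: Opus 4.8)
\emph{The plan is to read everything off the defining series (\ref{fmain32}) and reduce both assertions to the integrality of a single ``building block''.} Fix a multiple $D$ of $p$ and $d$, and for each $a$ with $(a,p)=1$ set
$$H_a(s)=\langle a \rangle^{1-s}\sum_{n=0}^{\infty}\binom{1-s}{n}B_n\Big(\frac{D}{a}\Big)^n,$$
so that $L_{p}(s,\chi)=\frac1D\frac1{s-1}\sum_{a}\chi(a)H_a(s)$. First I would record the elementary inputs: $\langle a\rangle\in 1+p\mathbb{Z}_p$, whence $\log_p\langle a\rangle\in p\mathbb{Z}_p$; $\frac Da\in p\mathbb{Z}_p$ whenever $v_p(D)=1$; $\binom{1-s}{n}\in\mathbb{Z}_p$; and the classical von Staudt--Clausen bound $v_p(B_n)\ge-1$. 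The whole lemma then comes down to understanding $H_a(s)$ as a power series in $t=s-1$.

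The key claim I would prove is that, when $v_p(D)=1$,
$$H_a(s)\equiv 1\pmod{p\,\mathbb{Z}_p[[t]]},\qquad t=s-1,$$
i.e. every coefficient of $t^{j}$ in $H_a(s)-1$ lies in $p\mathbb{Z}_p$. Writing $\langle a\rangle^{1-s}=\exp(-t\log_p\langle a\rangle)$, this holds for the exponential factor because $v_p\big((\log_p\langle a\rangle)^k/k!\big)\ge k-v_p(k!)\ge1$ for $k\ge1$. For the inner sum I would use that the coefficients of $\binom{-t}{n}$ have denominators dividing $n!$, so the $t^{j}$-coefficient of $\binom{-t}{n}B_n(D/a)^n$ has valuation at least $n-1-v_p(n!)$; the arithmetic inequality $v_p(n!)\le n-2$ for all $n\ge2$ (using $p\ge3$), plus a direct check of the $n=1$ term, forces every such coefficient into $p\mathbb{Z}_p$. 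Since $H_a(1)=1$, the constant term of $H_a-1$ vanishes, so in fact
$$H_a(s)-1=p\,(s-1)\,\widetilde\Phi_a(s-1),\qquad \widetilde\Phi_a\in\mathbb{Z}_p[[s-1]].$$
This is the crucial gain: it simultaneously cancels the pole $\tfrac1{s-1}$ and supplies one factor of $p$.

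With this the two cases are short. For $\chi=\mathbf 1$ take $D=p$; then already $H_a(s)\in\mathbb{Z}_p$ (each inner term has valuation $\ge n-1\ge0$), and
$$p(1-s)L_{p}(s,\mathbf 1)=\frac{p(1-s)}{p(s-1)}\sum_{a=1}^{p-1}H_a(s)=-\sum_{a=1}^{p-1}H_a(s)\in\mathbb{Z}_p,$$
for all $s\neq1$, and at $s=1$ by continuity; in particular for all $0\neq s\in\mathbb{Z}_p$. For $\chi\neq\mathbf1$ take $D=\mathrm{lcm}(p,d)$, so that the hypothesis $p^2\nmid d$ gives exactly $v_p(D)=1$, say $D=pm$ with $m\in\mathbb{Z}_p^{*}$. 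Here I must also show the $n=0$ term cancels: $\sum_{a=1,\,(a,p)=1}^{D}\chi(a)=0$, since $\sum_{a=1}^{D}\chi(a)=0$ (as $d\mid D$ and $\chi\neq\mathbf1$) while the contribution of the multiples of $p$ is $\chi(p)\sum_{b=1}^{D/p}\chi(b)$, which vanishes either because $D/p$ is a multiple of $d$ (when $p\nmid d$) or because $\chi(p)=0$ (when $p\mid d$). Using this cancellation,
$$L_{p}(s,\chi)=\frac1{pm(s-1)}\sum_{a}\chi(a)\big(H_a(s)-1\big)=\frac1m\sum_{a}\chi(a)\,\widetilde\Phi_a(s-1),$$
which lies in the ring of integers $\mathbb{Z}_p[\chi]$ for every $s\in\mathbb{Z}_p$ (and in $\mathbb{Z}_p$ itself when $\chi$ is $\mathbb{Z}_p$-valued).

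The main obstacle is the \emph{uniform} control of the factor $\tfrac1{s-1}$ near $s\equiv1\pmod p$: the pointwise estimates ``$H_a(s)\equiv1\pmod p$'' and ``$\sum_a\chi(a)=0$'' are not by themselves enough, because dividing a value in $p\mathbb{Z}_p$ by $s-1$ can restore a unit when $s\equiv1$. The device that resolves this is to work at the level of power series in $t=s-1$ and to prove the stronger divisibility $H_a(s)-1\in p(s-1)\,\mathbb{Z}_p[[s-1]]$; consequently the entire weight of the argument rests on the coefficientwise valuation estimate, i.e. on the interplay of $v_p(n!)\le n-2$ with $v_p(B_n)\ge-1$ and $v_p\big((D/a)^n\big)=n$.
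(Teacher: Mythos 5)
Your proof is correct, but it takes a different (and more self-contained) route than the paper: the paper disposes of the first assertion with ``follows directly from the definition'' and simply cites Washington, \emph{Introduction to Cyclotomic Fields}, Corollary 5.13, for the second, whereas you have in effect reproved that cited result from the series (\ref{fmain32}). Your key device --- passing from pointwise congruences to the coefficientwise divisibility $H_a(s)-1\in p(s-1)\mathbb{Z}_p[[s-1]]$, which simultaneously absorbs the factor $\tfrac{1}{s-1}$ and produces the factor of $p$ needed to cancel $\tfrac1D$ --- is exactly the mechanism behind Washington's Theorem 5.12/Corollary 5.13, and your identification of the genuine difficulty (division by $s-1$ near $s\equiv 1$) is on target. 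The estimates check out: $v_p(n!)\le n-2$ for $n\ge 2$ and $p\ge 3$, $v_p(B_n)\ge -1$ by von Staudt--Clausen, and $v_p((D/a)^n)=n$ when $v_p(D)=1$, so every $t^j$-coefficient of the $n\ge 2$ terms has valuation at least $1$; the $n=1$ term and the exponential factor are handled correctly, and the character-sum cancellation $\sum_{(a,p)=1}\chi(a)=0$ is argued correctly in both the $p\nmid d$ and $p\mid d$ cases. Two small points worth making explicit if you write this up: (i) to evaluate $\widetilde\Phi_a$ at an arbitrary $t\in\mathbb{Z}_p$ you should note that its coefficients tend to $0$ (the $t^j$-coefficient only receives contributions from terms with $n\ge j$ or $k\ge j$, whose valuations grow like $j(p-2)/(p-1)$), so the series converges on all of $\mathbb{Z}_p$ --- this is the analyticity the paper asserts without proof; (ii) your parenthetical that the value lands in $\mathbb{Z}_p[\chi]$ rather than $\mathbb{Z}_p$ in general is the honest statement, and is all that is used later since the characters appearing in the paper ($\omega^i$, $\eta$, $\phi$) are $\mathbb{Z}_p$-valued.
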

\begin{proof}
The first assertion follows directly from the definition. For the second assertion, see \cite[Corollary 5.13]{wa0}.
\end{proof}
The following congruences generalize the  Kummer's congruences for
generalized Bernoulli numbers \cite{wa0}:
\begin{lemma}\label{lemma5} Let  $\chi$ be a nontrivial primitive character modulo $d$, $p^2\nmid d$. Then for  integers $k$, $s$ and $t$, with $0<k<p-2$, we have
\begin{equation}\label{fmain325} L_{p}(s,\chi)\equiv L_{p}(s+p^{k-1}t,\chi) \  (mod\ p^k),\end{equation} and
\begin{equation}\label{fmain325} \Delta_t^k L_{p}(s,\chi)=\sum_{i=0}^k(-1)^i{k\choose
i}L_{p}(s+it,\chi)\equiv 0\  (mod\ p^k),\end{equation} where $\Delta_t$ is the forward difference operator with increment $t$.
   \end{lemma}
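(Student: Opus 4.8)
The plan is to derive both congruences from a single structural input, namely that for a nontrivial primitive $\chi$ with $p^2\nmid d$ the analytic function $s\mapsto L_{p}(s,\chi)$ admits a \emph{multiplicative} expansion
\[
 L_{p}(s,\chi)=\sum_{j=0}^{\infty}b_{j}\,u^{js},\qquad u=1+p,\ \ b_{j}\in\mathbb{Z}_p,\ \ b_{j}\to 0,
\]
convergent for all $s\in\mathbb{Z}_p$, or equivalently $L_{p}(s,\chi)=f(u^{s}-1)$ for some $f\in\mathbb{Z}_p[[T]]$. Here $u=1+p$ is a fixed topological generator of $1+p\mathbb{Z}_p$, and $u^{js}:=\exp(js\log_{p}u)$ lies in $1+p\mathbb{Z}_p$ by (\ref{fmain311})--(\ref{fmain312}), with $\log_{p}u\in p\mathbb{Z}_p$ of valuation $1$. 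This is the power-series refinement, in Washington's framework, of the value-integrality already recorded in Lemma~\ref{lemma49}. Such a multiplicative representation is essential: because the interpolation formula (\ref{fmain321}) carries an $\omega^{-n}$-twist that varies with $n=1-s$, there is no direct reduction to the classical Kummer congruences (which keep the character fixed), and one genuinely needs the $p$-adic analytic structure. I would establish this representation first; granting it, the two assertions are one-line computations.

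Granting the expansion, note that $u^{jt}\in 1+p\mathbb{Z}_p$, so $u^{jt}-1\in p\mathbb{Z}_p$ for every $j,t$. Since $\Delta_{t}$ acts on each exponential by $\Delta_{t}\big(u^{js}\big)=u^{j(s+t)}-u^{js}=u^{js}\big(u^{jt}-1\big)$, iterating gives
\[
 \Delta_{t}^{k}L_{p}(s,\chi)=\sum_{j=0}^{\infty}b_{j}\,u^{js}\big(u^{jt}-1\big)^{k},
\]
and each summand lies in $p^{k}\mathbb{Z}_p$ because $b_{j}\in\mathbb{Z}_p$, $u^{js}$ is a unit, and $(u^{jt}-1)^{k}\in p^{k}\mathbb{Z}_p$; as the series converges absolutely this proves the second congruence. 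For the first, the same computation with increment $p^{k-1}t$ yields the single difference $\sum_{j}b_{j}u^{js}\big(u^{jp^{k-1}t}-1\big)$, and now $u^{jp^{k-1}t}-1=\exp\!\big(jp^{k-1}t\,\log_{p}u\big)-1\in p^{k}\mathbb{Z}_p$, since $\log_{p}u\in p\mathbb{Z}_p$ forces the exponent into $p^{k}\mathbb{Z}_p$ and, $p$ being odd, $\exp(y)-1\in p^{k}\mathbb{Z}_p$ whenever $y\in p^{k}\mathbb{Z}_p$. Hence $L_{p}(s+p^{k-1}t,\chi)\equiv L_{p}(s,\chi)\pmod{p^{k}}$.

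The main work, and the only genuine obstacle, is to produce the expansion from the defining series (\ref{fmain32}). Using $p\mid D$ together with the von Staudt--Clausen bound $v_{p}(B_{n})\ge -1$, one checks $v_{p}\big(\binom{1-s}{n}B_{n}(D/a)^{n}\big)\ge n-1-v_{p}(n!)$, which exceeds $k$ for all $n>k$ precisely when $k<p-2$ (for $k<n<p$ it equals $n-1\ge k$, and for $n\ge p$ it is at least $(n-1)(p-2)/(p-1)\ge p-2>k$). Thus, uniformly in $s\in\mathbb{Z}_p$, the inner sum truncates to $n\le k<p$ modulo $p^{k}$, so only factorials $n!$ that are $p$-adic units survive; this is exactly where the hypothesis $0<k<p-2$ is used. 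The spurious pole of the prefactor $\frac{1}{s-1}$ cancels because $\sum_{a,(a,p)=1}\chi(a)=0$ for nontrivial $\chi$, leaving a $p$-integral expression by Lemma~\ref{lemma49}. Finally one rewrites the surviving $s$-dependence, carried by the polynomials $\binom{1-s}{n}$ and by $\langle a\rangle^{1-s}=\exp\big((1-s)\log_{p}\langle a\rangle\big)$, in the single variable $u^{s}$, via $\langle a\rangle=u^{c_{a}}$ with $c_{a}=\log_{p}\langle a\rangle/\log_{p}u\in\mathbb{Z}_p$ and $s=\log_{p}(u^{s})/\log_{p}u$. The delicate point is that this change of variable introduces denominators $1/\log_{p}u$ (of valuation $-1$) and $1/m$ from the $p$-adic logarithm; the crux is that, after collecting terms, these cancel and the coefficients $b_{j}$ are $p$-integral. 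This integrality is precisely what underlies Lemma~\ref{lemma49} (Washington, \cite[Corollary~5.13]{wa0}), and it is where the assumption $p^{2}\nmid d$ enters. With the representation in hand, the computations of the previous paragraph complete the proof.
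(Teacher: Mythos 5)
Your reduction of both congruences to the representation $L_{p}(s,\chi)=\sum_{j}b_{j}u^{js}$ with $u=1+p$ and $b_{j}\in\mathbb{Z}_p$ is clean and would work \emph{if} that representation were available, but as written the proof has a genuine gap exactly where you yourself locate the ``crux'': the $p$-integrality of the coefficients $b_{j}$. This is the statement that $L_{p}(s,\chi)=f(u^{s}-1)$ for an Iwasawa power series $f$ with integral coefficients, which is Washington's Theorem 7.10 (proved via Stickelberger elements/$p$-adic measures), not Corollary 5.13. Corollary 5.13 --- the content of Lemma~\ref{lemma49} --- only gives integrality of the \emph{values} $L_{p}(s,\chi)$, and value-integrality does not control the coefficients after the change of variable $s\mapsto u^{s}$: as you note, writing $s=\log_{p}(u^{s})/\log_{p}u$ and $\binom{1-s}{n}$ in terms of $T=u^{s}-1$ introduces denominators $\log_{p}u$ and $m$ whose cancellation is precisely the hard theorem. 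So the foundational step is asserted, mis-attributed to a weaker result, and not proved; everything downstream (the computation $\Delta_{t}^{k}=\sum_{j}b_{j}u^{js}(u^{jt}-1)^{k}$, and $u^{jp^{k-1}t}-1\in p^{k}\mathbb{Z}_p$) is fine but rests on it. A secondary inaccuracy: for general $\chi$ the coefficients lie in $\mathbb{Z}_p[\chi]$ rather than $\mathbb{Z}_p$, though this does not affect congruences modulo $p^{k}$.

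Your claim that ``one genuinely needs the $p$-adic analytic structure'' and that no direct argument from the defining series is possible is also not borne out: the paper's own proof of the second congruence is elementary and stays entirely inside the definition (\ref{fmain32}). It expands $\langle a\rangle^{1-s}$ by the binomial series in $(\langle a\rangle-1)^{m}\in p^{m}\mathbb{Z}_p$ (rather than in powers of $u^{s}$, which is what forces division by $\log_{p}u$), obtaining terms $g(s,m,n)=\chi(a)\tfrac{1}{D}\tfrac{1}{s-1}\binom{1-s}{m}\binom{1-s}{n}B_{n}(\langle a\rangle-1)^{m}(D/a)^{n}$ with $m+n>0$; terms with $m+n>k$ are divisible by $p^{k}$ (here $k<p-2$ keeps the factorials and the von Staudt--Clausen denominators under control), while terms with $m+n\le k$ are polynomials in $s$ of degree less than $k$, since $(1-s)$ divides $\binom{1-s}{m}$, and are therefore annihilated exactly by $\Delta_{t}^{k}$. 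That is the idea your approach is missing: you do not need the exact multiplicative expansion, only a decomposition into ``small'' terms plus low-degree polynomials. If you want to keep your route, you must either cite Theorem 7.10 of Washington as a black box (a far heavier input than the paper uses) or replace the change of variable to $u^{s}$ by the binomial expansion in $\langle a\rangle-1$.
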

\begin{proof}The first congruence follows from \cite[Theorem 5.12]{wa0}. Thus it suffices to prove the second.
We choose $D$ in (\ref{fmain32}) such that $p^2\nmid D$. Then by definition, $L_{p}(s,\chi)$ is an infinite sum of the terms
$$g(s,m,n)=\chi(a)\frac{1}{D}\frac{1}{s-1}{1-s \choose m}{1-s \choose n}(B_{n})(\langle a \rangle-1)^m(\frac{D}{a})^n$$ where $m+n>0$.
 If $m+n>k$, we have $p^k|g(s,m,n)$ for $s\in\mathbb{Z}_p$, hence $p^k|\Delta_t^kg(s,m,n)$. If $m+n\leq k$, then $ g(s,m,n)$ is a polynomial in $s$ of degree less than $k$,
  hence $\Delta_t^kg(s,m,n)=0$.
\end{proof}

For a primitive character $\chi$, and two positive integers $m$, $k$, with $m<p-1$, set
$$B_p(m,k;\chi):=\sum_{i=1}^k(-1)^{i}{k\choose
i}\frac{B_{i(p-1)+1-m,\chi }}{i(p-1)+1-m}, $$ and set
$B_p(m,k)=B_p(m,k;\textbf{1})$. Then by Lemma \ref{lemma5} and
(\ref{fmain321}), if $m+k\leq p-1$ and  $p^2$ does not divide the
conductor of $ \chi\omega^{1-m}$, we have
\begin{equation}\label{fmain327} L_{p}(m,\chi\omega^{1-m})\equiv B_p(m,k;\chi) \equiv -\frac{B_{p^{k-1}(p-1)+1-m,\chi }}{p^{k-1}(p-1)+1-m}\  (mod\ p^k).\end{equation}

 \section{Washington's $p$-adic expansions of sums of powers}
 In\cite{wa}, Washington gave the following $p$-adic expansions of
harmonic sums

 \begin{theorem} \label{main4}
Let $p$ be an odd prime and let $d$, $m$ be positive integers.
Then
\begin{equation}\label{fmain10}\sum_{k=1 \atop
(k,p)=1}^{dp}\frac{1}{k^m}=-\sum_{n=1}^{\infty}{-m\choose
n}L_{p}(m+n,\omega^{1-m-n})(dp)^n.\end{equation}
\end{theorem}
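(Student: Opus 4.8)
The plan is to substitute the defining formula (\ref{fmain32}) for the $p$-adic $L$-function directly into the right-hand side, choosing the modulus $D=dp$ so that its range of summation matches that of the left-hand side. The decisive simplification is that, since $1-s$ is an honest integer when $s=m+n$ and since $a=\omega(a)\langle a\rangle$, the character and Teichm\"uller factors fuse: $\omega^{1-m-n}(a)\langle a\rangle^{1-m-n}=a^{1-m-n}$. Hence
\[ L_{p}(m+n,\omega^{1-m-n})=\frac{1}{dp}\,\frac{1}{m+n-1}\sum_{a=1,\,(a,p)=1}^{dp}a^{1-m-n}\sum_{j=0}^{\infty}\binom{1-m-n}{j}B_{j}\Big(\frac{dp}{a}\Big)^{j}. \]
Inserting this into $-\sum_{n\ge1}\binom{-m}{n}(dp)^{n}L_{p}(m+n,\omega^{1-m-n})$ and pulling the sum over $a$ to the outside rewrites the right-hand side as $-\tfrac{1}{dp}\sum_{a}a^{1-m}\Sigma_a$, where $\Sigma_a=\sum_{n\ge1,\,j\ge0}\frac{\binom{-m}{n}}{m+n-1}\binom{1-m-n}{j}B_{j}\,(dp/a)^{n+j}$ is a double series in the single quantity $dp/a$.

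Next I collect $\Sigma_a$ by total degree $l=n+j$ in $dp/a$. Because $v_p(dp/a)\ge1$ (as $(a,p)=1$) and, by von Staudt--Clausen, $v_p(B_j)\ge-1$, while $1/(m+n-1)$ only contributes a denominator growing like $\log_p(m+n-1)$, the degree-$l$ terms have valuation tending to $+\infty$; so the double series converges and may be rearranged freely $p$-adically. The coefficient of $(dp/a)^{l}$ is
\[ C_l:=\sum_{n=1}^{l}\frac{\binom{-m}{n}}{m+n-1}\binom{1-m-n}{l-n}B_{l-n}. \]
I expect to find $C_1=-1$ and $C_l=0$ for $l\ge2$, so that $\Sigma_a=C_1\,(dp/a)=-(dp/a)$ and therefore $-\tfrac{1}{dp}\sum_{a}a^{1-m}\cdot(-(dp)a^{-1})=\sum_{a}a^{-m}$, which is exactly the left-hand side. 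Thus the whole theorem collapses onto these two assertions about $C_l$.

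Establishing $C_l=0$ for $l\ge2$ is the heart of the argument and the main obstacle, since it must be extracted from a genuinely triple sum. The key is to spot that $\frac{\binom{-m}{n}}{m+n-1}=\frac{1}{m-1}\binom{1-m}{n}$ for $m\ge2$, together with the subset-of-a-subset identity $\binom{1-m}{n}\binom{1-m-n}{l-n}=\binom{1-m}{l}\binom{l}{n}$ (a polynomial identity in the upper parameter). These decouple the index $n$ from the binomial in $1-m$, giving
\[ C_l=\frac{1}{m-1}\binom{1-m}{l}\sum_{n=1}^{l}\binom{l}{n}B_{l-n}=\frac{1}{m-1}\binom{1-m}{l}\big(B_l(1)-B_l\big). \]
The elementary Bernoulli fact $B_l(1)-B_l=l\cdot 0^{l-1}$ then forces $C_l=0$ for $l\ge2$, while $l=1$ yields $C_1=\frac{1}{m-1}(1-m)\cdot1=-1$; the case $m=1$ is identical after rewriting $\frac{(-1)^n}{n}\binom{-n}{l-n}=\frac{(-1)^l}{l}\binom{l}{n}$, which again reduces $C_l$ to a multiple of $B_l(1)-B_l$. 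Once the trinomial revision is noticed the combinatorics trivializes, and the only remaining care is the (routine) $p$-adic convergence estimate already used to justify the interchange of summations.
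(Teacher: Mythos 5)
Your argument is correct, and it is genuinely different from what the paper does. The paper offers no proof of Theorem \ref{main4} at all: it is quoted from Washington \cite{wa}, and the only proof in the same spirit that the paper does carry out is that of the generalization, Theorem \ref{main4k}, where the strategy is to verify the identity when the exponent is a negative integer (using the Szmidt--Urbanowicz--Zagier identity, Lemma \ref{lemma1}, which turns the power sum into generalized Bernoulli numbers, i.e.\ into special values $L_p(1-n,\chi\omega^{n})$) and then to conclude for all $s\in\mathbb{Z}_p$ by analyticity/continuity of both sides. You instead substitute the defining series (\ref{fmain32}) with $D=dp$ directly into the right-hand side and collapse the resulting double sum. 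Your key steps all check out: $\omega^{1-m-n}(a)\langle a\rangle^{1-m-n}=a^{1-m-n}$ since $1-m-n$ is an integer and $a=\omega(a)\langle a\rangle$; the identity $\binom{-m}{n}/(m+n-1)=\binom{1-m}{n}/(m-1)$ and the trinomial revision $\binom{1-m}{n}\binom{1-m-n}{l-n}=\binom{1-m}{l}\binom{l}{n}$ are valid polynomial identities (and your separate $m=1$ rewriting is also correct); the reduction to $B_l(1)-B_l=l\cdot 0^{l-1}$ gives $C_1=-1$, $C_l=0$ for $l\ge 2$ (e.g.\ for $m=2$, $l=2$ the two terms are $-1$ and $+1$); and the rearrangement is justified since the $(n,j)$ term has valuation at least $n+j-1-v_p(m+n-1)\to\infty$. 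Note also that $s=m+n\ge 2$ throughout, so the pole of $L_p(s,\mathbf{1})$ at $s=1$ never intervenes. What each approach buys: yours is self-contained and elementary but is tied to the particular series representation (\ref{fmain32}); the paper's continuity method avoids all the binomial gymnastics and transfers more readily to the twisted sums of Lehmer type treated in Section 4, where the direct substitution would be substantially messier.
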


Theorem \ref{main4} together with (\ref{fmain327}), immediately implies the following generalization of Wolstenholme's Theorem:

\begin{corollary} \label{C1}
Let $p$, $d$, and $m$ be as before. Let $j$ be another positive integer, with $j+m\leq p-1$. Then we have
\begin{equation}\label{fmain11}\sum_{k=1 \atop
(k,p)=1}^{dp}\frac{1}{k^m}\equiv -\sum_{n=1}^{j-1}{-m\choose
n}B_p(m+n,j-n)(dp)^n\  (mod\ p^j),\end{equation} and
 \begin{equation}\label{fmain11xia}
 \sum_{k=1 \atop
(k,p)=1}^{dp}\frac{1}{k^m}\equiv \sum_{n=1}^{j-1}-{-m\choose
n}\frac{B_{p^{j-n-1}(p-1)+1-m-n}}{p^{j-n-1}+m+n-1}(dp)^n\  (mod\ p^j).\end{equation}
\end{corollary}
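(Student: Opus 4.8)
The plan is to derive both congruences directly from Washington's expansion in Theorem \ref{main4} by truncating the infinite sum and applying the congruences for $p$-adic $L$-functions recorded in the preliminaries. First I would examine the $p$-adic size of each term in the series
\[
\sum_{n=1}^{\infty}{-m\choose n}L_{p}(m+n,\omega^{1-m-n})(dp)^n.
\]
For $n\geq j$ the factor $(dp)^n$ already contributes $p^n$ with $n\geq j$, and since $L_{p}(s,\chi)$ lands in $\mathbb{Z}_p$ for the relevant characters (by Lemma \ref{lemma49}, noting $\chi=\omega^{1-m-n}$ is nontrivial and has conductor dividing $p$ so $p^2\nmid d$), the binomial coefficient ${-m\choose n}$ is a $p$-adic integer as well, and thus every term with $n\geq j$ is divisible by $p^{j}$. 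Hence modulo $p^j$ the infinite sum collapses to the finite range $1\leq n\leq j-1$, giving
\[
\sum_{k=1 \atop (k,p)=1}^{dp}\frac{1}{k^m}\equiv -\sum_{n=1}^{j-1}{-m\choose n}L_{p}(m+n,\omega^{1-m-n})(dp)^n\ (mod\ p^j).
\]

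Next I would replace each surviving $L$-value by its finite Bernoulli approximation. Here I want to use (\ref{fmain327}), which states that for a primitive character (taken as $\chi=\textbf{1}$, so that $\chi\omega^{1-m'}=\omega^{1-m'}$) one has $L_{p}(m',\omega^{1-m'})\equiv B_p(m',k;\textbf{1})=B_p(m',k)\ (mod\ p^k)$ provided $m'+k\leq p-1$. The key bookkeeping step is to choose, for each term indexed by $n$, the parameters $m'=m+n$ and $k=j-n$. Then the term carries a factor $(dp)^n$, contributing an extra $p^n$, so an error of size $p^{k}=p^{j-n}$ in the $L$-value becomes an error of size $p^{j-n}\cdot p^{n}=p^{j}$ after multiplication by $(dp)^n$; this is exactly what makes the replacement valid modulo $p^j$. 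The side condition $m'+k=(m+n)+(j-n)=m+j\leq p-1$ is precisely the hypothesis $j+m\leq p-1$, so (\ref{fmain327}) applies uniformly across all $n$ in the range. This yields the first congruence (\ref{fmain11}).

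For the second congruence (\ref{fmain11xia}) I would use the rightmost equivalence in (\ref{fmain327}), namely $B_p(m',k)\equiv -\frac{B_{p^{k-1}(p-1)+1-m'}}{p^{k-1}(p-1)+1-m'}\ (mod\ p^k)$, again with $m'=m+n$ and $k=j-n$, and substitute directly into (\ref{fmain11}); the same factor-of-$p^n$ argument upgrades the mod $p^{j-n}$ statement to mod $p^j$ after multiplying by $(dp)^n$. The resulting exponent and denominator match the asserted form $\frac{B_{p^{j-n-1}(p-1)+1-m-n}}{p^{j-n-1}+m+n-1}$. I expect the routine part—the truncation and the integrality of each factor—to be straightforward; the main point requiring care is the coupled choice $k=j-n$ tied to the power $(dp)^n$, ensuring the accumulated precision stays at $p^j$ for every term simultaneously, together with verifying that the single hypothesis $j+m\leq p-1$ suffices to validate (\ref{fmain327}) across the entire summation range. (One should also confirm the apparent typo in the denominator of (\ref{fmain11xia}), which ought to read $p^{j-n-1}(p-1)+1-m-n$ to agree with the numerator's index.)
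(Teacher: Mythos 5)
Your proposal is correct and is exactly the argument the paper intends: the paper gives no separate proof of Corollary \ref{C1}, stating only that it follows from Theorem \ref{main4} together with (\ref{fmain327}), and your truncation-plus-replacement with the coupled choice $m'=m+n$, $k=j-n$ is precisely how that implication works (including your correct observation that the denominator index in (\ref{fmain11xia}) should read $p^{j-n-1}(p-1)+1-m-n$). The only point needing a touch more care is your claim that $\omega^{1-m-n}$ is nontrivial for all discarded indices $n\geq j$: it is trivial whenever $(p-1)\mid(m+n-1)$, which forces $n\geq p-m>j$, and there one must instead invoke the first part of Lemma \ref{lemma49} to bound the pole contribution of $L_p(s,\textbf{1})$ and check that $(dp)^n$ still supplies at least $p^j$.
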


Now we give $p$-adic expansions of ${cp\choose dp}/{c\choose d}$
for $c>d>0$. Set
$$H_p(d;m)=\sum_{k=1 \atop
(k,p)=1}^{dp}\frac{1}{k^m}.$$
\begin{theorem} \label{CC5}
For $c>d>0$, we have
\begin{equation}\label{fmain11xiaqw}
{cp\choose d p}\Big/{c\choose d}=exp\big(-\sum_{k=3}^{\infty}(c^k-(c-d)^k-d^{k})L_{p}(k,\omega^{1-k})p^{k}/k\big).\end{equation}
\end{theorem}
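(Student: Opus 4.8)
The plan is to strip the powers of $p$ from the factorials, pass to a $p$-adic logarithm, and feed the resulting power sums into Washington's Theorem \ref{main4}.

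First I would reduce the quotient to a product of $1$-units. Writing $(cp)!=p^{c}\,c!\,P(c)$ with $P(c):=\prod_{k=1,(k,p)=1}^{cp}k$ (the multiples of $p$ up to $cp$ account for the factor $p^{c}c!$), the powers of $p$ and the ordinary factorials cancel in $\binom{cp}{dp}/\binom{c}{d}$, leaving $P(c)/\big(P(d)P(c-d)\big)$. Factoring $k=\omega(k)\langle k\rangle$, the Teichm\"uller part of $P(c)$ is $\prod_{k}\omega(k)=\big(\omega((p-1)!)\big)^{c}=(-1)^{c}$, using Wilson's congruence (\ref{fmain13}) and the fact that each block of $p$ consecutive integers contributes $\prod_{a=1}^{p-1}\omega(a)=\omega(-1)=-1$. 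Since $c-d-(c-d)=0$, these signs cancel, so the quotient is a ratio of products of the $1$-units $\langle k\rangle$ and hence lies in $1+p\mathbb{Z}_p$. It then suffices to compute $F(c)-F(d)-F(c-d)$, where $F(c):=\sum_{k=1,(k,p)=1}^{cp}\log_p\langle k\rangle$, and to exponentiate at the end.

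Next I would recover $F$ from the harmonic sums $H_p$. When $(p-1)\mid m$ one has $\omega(k)^{m}=1$, hence $\langle k\rangle^{-m}=k^{-m}$ and $\sum_{k}\langle k\rangle^{-m}=H_p(c;m)$. Expanding $\langle k\rangle^{-m}=\exp(-m\log_p\langle k\rangle)$ and summing gives $H_p(c;m)=c(p-1)-m\,F(c)+O(m^{2})$, the constant being the count $c(p-1)$ of admissible $k$. Because $c(p-1)-d(p-1)-(c-d)(p-1)=0$, letting $m\to0$ along $m=p^{j-1}(p-1)$ produces
\[F(c)-F(d)-F(c-d)=-\lim_{\substack{m\to0\\(p-1)\mid m}}\frac{1}{m}\big(H_p(c;m)-H_p(d;m)-H_p(c-d;m)\big).\]
I would then substitute Theorem \ref{main4}, obtaining $H_p(c;m)-H_p(d;m)-H_p(c-d;m)=-\sum_{n\ge1}\binom{-m}{n}L_p(m+n,\omega^{1-m-n})p^{n}\big(c^{n}-d^{n}-(c-d)^{n}\big)$, in which the $n=1$ term vanishes since its coefficient is $c-d-(c-d)=0$. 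As $\binom{-m}{n}$ carries exactly one factor of $m$, we have $\binom{-m}{n}/m\to(-1)^{n}/n$; moreover $\omega^{1-m-n}=\omega^{1-n}$ for $(p-1)\mid m$ and $L_p(m+n,\omega^{1-n})\to L_p(n,\omega^{1-n})$ by continuity (no pole intervenes for $n\ge2$). Taking the limit gives $\sum_{n\ge2}\frac{(-1)^{n}}{n}L_p(n,\omega^{1-n})p^{n}\big(c^{n}-d^{n}-(c-d)^{n}\big)$; for even $n$ the character $\omega^{1-n}$ is odd so $L_p(n,\omega^{1-n})=0$, leaving only odd $n\ge3$, where $(-1)^{n}=-1$. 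This is exactly $-\sum_{k\ge3}(c^{k}-(c-d)^{k}-d^{k})L_p(k,\omega^{1-k})p^{k}/k$, and applying $\exp$ finishes the argument.

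The main obstacle will be justifying the interchange of the limit $m\to0$ with the infinite sum over $n$. I expect this to follow from $p$-adic estimates: the values $L_p(n,\omega^{1-n})$ are bounded by Lemma \ref{lemma49}, the quantities $\binom{-m}{n}/m$ remain $p$-integral and bounded uniformly for $m$ near $0$, and the factors $p^{n}/n$ (and $p^{n}/n!$ arising in the exponential expansion of Step 2) have valuation tending to $+\infty$, giving uniform convergence in $m$. The only additional care concerns the indices $n\equiv1\ (\mathrm{mod}\ p-1)$, where $\omega^{1-n}$ is trivial and $L_p(s,\mathbf{1})$ has its pole; but these are evaluated at $s=n\ge3\neq1$, so no pole is encountered.
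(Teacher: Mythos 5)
Your argument is correct, but it reaches Theorem \ref{CC5} by a genuinely different route than the paper. The paper writes the quotient as $\prod_{k\le dp,\,(k,p)=1}\bigl(1+(c-d)p/k\bigr)$, applies $\log_p$ and the Mercator expansion to get $\exp\bigl(\sum_{m\ge1}(-1)^{m-1}H_p(d;m)(c-d)^mp^m/m\bigr)$, substitutes Theorem \ref{main4} for each $H_p(d;m)$, and collects the resulting double series along $m+n=k$ with the binomial identity $\sum_{m=1}^{k-1}\binom{k}{m}(c-d)^md^{k-m}=c^k-(c-d)^k-d^k$ (plus the vanishing of $L_p(k,\omega^{1-k})$ for even $k$). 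You instead reduce to the ratio $P(c)/(P(d)P(c-d))$ of prime-to-$p$ factorial products, strip Teichm\"uller parts, and obtain $\log_p$ of the remaining $1$-unit as $-\lim_{m\to0,\,(p-1)\mid m}\frac1m\bigl(H_p(c;m)-H_p(d;m)-H_p(c-d;m)\bigr)$, evaluated termwise via Theorem \ref{main4}; your limit computation (the $n=1$ term cancels identically, $\binom{-m}{n}/m\to(-1)^n/n$, continuity of $L_p$ away from $s=1$, vanishing at even $n$) is sound, and your sketch of the uniform-convergence justification for the interchange is adequate given that $v_p$ of the $n$-th term grows like $n-O(\log_p n)$ uniformly in $m$ by Lemma \ref{lemma49}. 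What each approach buys: the paper's is shorter and needs only a rearrangement of a double series, at the cost of breaking the symmetry between $d$ and $c-d$ until the final step; yours is manifestly symmetric in $c$, $d$, $c-d$ from the outset and amounts to computing $\log_p$ of Morita-type products as a derivative of the power-sum interpolation at $s\to0$, which is closer in spirit to the $\Gamma_p$ expansions the paper uses later in Section 5, but it carries the extra burden of the limit--sum interchange.
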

\begin{proof}
By Corollary \ref{C1},
  {\setlength{\arraycolsep}{0pt}
\begin{eqnarray}\label{fmain265511}
&&{cp\choose d p}\Big/{c\choose d}=\prod_{k=1 \atop
(k,p)=1}^{dp}(1+(c-d)p/k)  \nonumber
\\
&&=exp\Big(\sum_{k=1 \atop (k,p)=1}^{dp}log_p(1+\frac{(c-d)p}{k})
\Big)\nonumber
\\
&&=exp\Big(\sum_{k=1 \atop
(k,p)=1}^{dp}\sum_{m=1}^{\infty}(-1)^{m-1}\frac{(c-d)^mp^m}{mk^m}\Big)   \nonumber\\
&&=exp\Big(\sum_{m=1}^{\infty}(-1)^{m-1}H_p(d;m)(c-d)^mp^m/m\Big)    \nonumber\\
&&=exp\Big(\sum_{m=1}^{\infty}\sum_{n=1}^{\infty}(-1)^{m+n}{m+n-1\choose
n}L_{p}(m+n,\omega^{1-m-n})(c-d)^md^np^{m+n}/m\Big)     \nonumber \\
&&=exp\Big(-\sum_{k=3}^{\infty}\sum_{m=1}^{k-1}{k\choose
m}L_{p}(k,\omega^{1-k})(c-d)^md^{k-m}p^{k}/k\Big)
\nonumber   \\
&&=exp\Big(-\sum_{k=3}^{\infty}(c^k-(c-d)^k-d^{k})L_{p}(k,\omega^{1-k})p^{k}/k\Big),
\nonumber
\end{eqnarray}
} where the sixth $=$ follows from the fact that $L_{p}(k,\omega^{1-k})=0$ whenever $k$ is even.
\end{proof}

The above expansion  in terms of  $H_p(d;m)$ or $p$-adic
$L$-functions, together with (\ref{fmain327}), covers many known
congruence about binomial
coefficients\cite{Ba0,ca,gl0,gl1,ht,ka,me,ro,ta,zhao2}. By Theorem
\ref{CC5}, we can easily write down a  $mod\ p^8$ evaluation of
${cp\choose dp}/{c\choose d}$ in terms of Bernoulli numbers:
\begin{corollary} \label{C4}For $p>7$, we have
{\setlength{\arraycolsep}{0pt}
\begin{eqnarray}\label{fmain2655}
&&{cp\choose dp}/{c\choose d}\equiv1-(c^2d-cd^2)\frac{B_{p^{5}-p^4-2}}{p^{4}+2}p^3
\\
&&-(c^4d-2c^3d^2+2c^2d^3-cd^4)\frac{B_{p^{3}-p^2-4}}{p^{2}+4}p^5   \nonumber
\\
&&-(c^6d-3c^5d^2+5c^4d^3-5c^3d^4+3c^2d^5-cd^6)\frac{B_{p-7}}{7}p^7\nonumber\\
&&+(c^2d-cd^2)^2\frac{B_{p^{2}-p-2}^2}{2(p+2)^2}p^6   \  \    (mod\ p^8)
\nonumber
\end{eqnarray}
}\end{corollary}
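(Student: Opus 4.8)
The plan is to start from the closed form in Theorem~\ref{CC5} and expand the exponential $p$-adically to precision $p^8$, replacing each value $L_{p}(k,\omega^{1-k})$ by a Bernoulli number through the Kummer-type congruence~(\ref{fmain327}). Write $A=-\sum_{k\geq 3}(c^k-(c-d)^k-d^k)L_{p}(k,\omega^{1-k})p^{k}/k$ for the argument of $\exp$. First I would reduce the sum. Since $L_{p}(k,\omega^{1-k})=0$ for even $k$, only odd $k$ survive; for $k=3,5,7$ the character $\omega^{1-k}$ is nontrivial of conductor $p$, so Lemma~\ref{lemma49} gives $L_{p}(k,\omega^{1-k})\in\mathbb{Z}_p$, and since $v_p(p^k/k)=k$ the corresponding summand has valuation $\geq k$; a routine valuation estimate then shows every odd term with $k\geq 9$ lies in $p^8\mathbb{Z}_p$. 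Thus modulo $p^8$ only $k=3,5,7$ contribute to $A$, and because $v_p(A)\geq 3$ we have $A^3\equiv 0\pmod{p^9}$, so
\begin{equation*}
\exp(A)\equiv 1+A+\tfrac12 A^2\pmod{p^8},
\end{equation*}
where in $A^2$ only the square of the $k=3$ term survives modulo $p^8$.

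Next I would evaluate the two ingredients. The polynomial factors collapse: a binomial expansion gives $c^k-(c-d)^k-d^k=k\,P_k(c,d)$ with $P_3=c^2d-cd^2$, $P_5=c^4d-2c^3d^2+2c^2d^3-cd^4$ and $P_7=c^6d-3c^5d^2+5c^4d^3-5c^3d^4+3c^2d^5-cd^6$, so the factor $k$ cancels the $1/k$ in $A$. Hence the $k=3,5,7$ terms of $A$ are $-P_k\,L_{p}(k,\omega^{1-k})p^{k}$, while $\tfrac12A^2\equiv\tfrac12 P_3^2\,L_{p}(3,\omega^{-2})^2p^6$. It then remains to expand each $L_{p}$-value by~(\ref{fmain327}) (with $\chi=\textbf{1}$) to the precision forced by its attached power of $p$: I need $L_{p}(3,\omega^{-2})$ modulo $p^5$ for the linear $k=3$ term but only modulo $p^2$ inside $A^2$ (producing the two distinct indices $p^5-p^4-2$ and $p^2-p-2$), $L_{p}(5,\omega^{-4})$ modulo $p^3$, and $L_{p}(7,\omega^{-6})$ modulo $p$. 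Each expansion is licensed by~(\ref{fmain327}) precisely because the relevant pair $(m,k)$ satisfies $m+k\le p-1$, i.e. $8\le p-1$, which is exactly the hypothesis $p>7$.

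Finally I would match the denominators. In~(\ref{fmain327}) the Bernoulli subscript $p^{k-1}(p-1)+1-m$ equals $p^5-p^4-2$, $p^3-p^2-4$, $p-7$ and $p^2-p-2$ in the four cases; the accompanying denominator is the same quantity, which reduces modulo $p^k$ to $-(p^{k-1}+m-1)$, namely $-(p^4+2)$, $-(p^2+4)$, $-7$ and $-(p+2)$. Inverting these modulo the relevant power of $p$ absorbs the leading minus sign of~(\ref{fmain327}) and yields $L_{p}(3,\omega^{-2})\equiv B_{p^5-p^4-2}/(p^4+2)$, and similarly for the remaining values. Substituting into $1+A+\tfrac12 A^2$ then reproduces the four displayed terms. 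The one genuine bookkeeping hazard, and the step I would double-check most carefully, is aligning the $p$-adic precision of each $L_{p}$-expansion with the power of $p$ multiplying it—especially the use of $L_{p}(3,\omega^{-2})$ at two different precisions for the linear and the quadratic contributions—since an off-by-one there would corrupt exactly the $p^6$ and $p^7$ terms.
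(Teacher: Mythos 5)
Your proposal is correct and is exactly the computation the paper intends: expand $\exp(A)\equiv 1+A+\tfrac12A^2$ from Theorem~\ref{CC5}, keep only $k=3,5,7$ (and the square of the $k=3$ term), and convert each $L_p$-value via~(\ref{fmain327}) at the precision dictated by its power of $p$, which is precisely where the hypothesis $p>7$ (i.e.\ $m+k\le 8\le p-1$) enters. The polynomial identities $c^k-(c-d)^k-d^k=kP_k(c,d)$, the signs, and the reduction of the denominators $p^{k-1}(p-1)+1-m$ to $-(p^{k-1}+m-1)$ all check out against the displayed congruence.
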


Next, we show that the $mod\ p^k$ evaluations of the homogeneous multiple harmonic sums(HMHS)
\begin{equation}M_p(d,m;n)=\sum_{1\leq k_1<\cdots< k_n<dp \atop
(k_i,p)=1}\frac{1}{k_1^m\cdots k_n^m},\end{equation} and
\begin{equation}\overline{M}_p(d,m;n)=\sum_{1\leq k_1\leq\cdots\leq k_n<dp }\frac{1}{k_1^m\cdots k_n^m},\end{equation}
can also be reduced to that of $H_p(d;m)$. Let $t$ be an
indeterminate, then formally we have
  {\setlength{\arraycolsep}{0pt}
\begin{eqnarray}\label{fmain2656d}
1+\sum_{n=1}^{dp}M_p(d,m;n)t^n&=&\prod_{k=1 \atop
(k,p)=1}^{dp}(1+\frac{t}{k^m})
\\
&=&exp\Big(\sum_{k=1 \atop
(k,p)=1}^{dp}log(1+\frac{t}{k^m}) \Big)\nonumber
\\
&=&exp\Big(\sum_{k=1 \atop
(k,p)=1}^{dp}\sum_{j=1}^{\infty}(-1)^{j-1}\frac{t^j}{jk^{jm}}\Big)   \nonumber\\
&=&exp\Big(\sum_{j=1}^{\infty}(-1)^{j-1}H_p(d;mj)\frac{t^j}{j}\Big).
\nonumber
\end{eqnarray}
}
Similarly we have
\begin{equation}\label{fmain2657d} 1+\sum_{n=1}^{\infty}\overline{M}_p(d,m;n)t^n=\prod_{k=1 \atop
(k,p)=1}^{dp}(1-\frac{t}{k^m})^{-1}=exp\Big(\sum_{j=1}^{\infty}H_p(d;mj)\frac{t^j}{j}\Big).\end{equation}
Applying Corollary \ref{C1}  to (\ref{fmain2656d}) and
(\ref{fmain2657d}), we get the following congruences which improve
the previous results about HMHS \cite{zhoucai}.
\begin{corollary} \label{C5}
For $n>1$, $p>mn+4,$ we have
\begin{equation*}\label{fmain2658} M_p(d,m;n)\equiv\begin{cases}\sum_{j=1}^{n-1}\frac{m^2(mj+1)B_{p-mj-2}B_{p-m(n-j)-1}}{2(mj+2)(mn-mj+1)}d^3p^3 \\ &\\-\frac{m(mn+1)B_{p^2-p-mn-1}}{2(p+mn+1)}d^2p^2\ \ \ \ (mod\ p^4)&\text{if}\ mn\  is\ odd \\ &\\ (-1)^{n}\sum_{j=1}^{n-1}\frac{m^2B_{p-mj-1}B_{p-m(n-j)-1}}{2(mj+1)(mn-mj+1)}d^2p^2 \\ &\\
-(-1)^{n}\frac{mB_{p^2-p-mn}}{p+mn}dp \ \ \ \   \ \ \  \ \ \ (mod\ p^3)& \text{if}\ m\ is\ even
\\ &\\\sum_{j=1}^{n-1}\frac{m^2B_{p-mj-1}B_{p-m(n-j)-1}}{2(mj+1)(mn-mj+1)}d^2p^2 \\ &\\
-\frac{mB_{p^2-p-mn}}{p+mn}dp  \  \ \ \  \ \ \  \ \ \  \ \ \  \  \ \ \ (mod\ p^3)& \text{if}\ m\ is\ odd\ and\ n\ is\ even
\\ \end{cases}\end{equation*}
and
\begin{equation*}\label{fmain2658} \overline{M}_p(d,m;n)\equiv\begin{cases}-\sum_{j=1}^{n-1}\frac{m^2(mj+1)B_{p-mj-2}B_{p-m(n-j)-1}}{2(mj+2)(mn-mj+1)}d^3p^3 \\ &\\-\frac{m(mn+1)B_{p^2-p-mn-1}}{2(p+mn+1)}d^2p^2\ \ \ \ (mod\ p^4)&\text{if}\ mn\  is\ odd \\ &\\ \sum_{j=1}^{n-1}\frac{m^2B_{p-mj-1}B_{p-m(n-j)-1}}{2(mj+1)(mn-mj+1)}d^2p^2 \\ &\\
+\frac{mB_{p^2-p-mn}}{p+mn}dp \ \ \ \   \ \ \  \ \ \ (mod\ p^3)& \text{if}\ m\ is\ even
\\ &\\\sum_{j=1}^{n-1}\frac{m^2B_{p-mj-1}B_{p-m(n-j)-1}}{2(mj+1)(mn-mj+1)}d^2p^2 \\ &\\
-\frac{mB_{p^2-p-mn}}{p+mn}dp  \  \ \ \  \ \ \  \ \ \  \ \ \  \  \ \ \ (mod\ p^3)& \text{if}\ m\ is\ odd\ and\ n\ is\ even
\\ \end{cases}\end{equation*}
\end{corollary}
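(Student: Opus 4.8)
The plan is to read $M_p(d,m;n)$ and $\overline{M}_p(d,m;n)$ off the exponential generating functions (\ref{fmain2656d}) and (\ref{fmain2657d}) and then feed in the mod-$p^k$ information about the power sums $H_p(d;mj)$ supplied by Corollary \ref{C1}. Writing $a_j=(-1)^{j-1}H_p(d;mj)/j$ on the $M_p$ side and $\bar a_j=H_p(d;mj)/j$ on the $\overline{M}_p$ side, the exponential formula gives
$$M_p(d,m;n)=\sum_{r=1}^{n}\frac{1}{r!}\sum_{\substack{j_1+\cdots+j_r=n\\ j_i\geq 1}}a_{j_1}\cdots a_{j_r},$$
and likewise for $\overline{M}_p$ with the $\bar a_j$. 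So the problem reduces to (i) controlling the $p$-adic size of each $H_p(d;mj)$, and (ii) discarding the compositions whose product is already $\equiv 0$ modulo the target power of $p$.

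The crucial input is a parity-based valuation estimate read off from Washington's expansion (Theorem \ref{main4}) together with the vanishing $L_{p}(k,\omega^{1-k})=0$ for even $k$: the leading contribution to $H_p(d;M)$ is the $n=1$ term when $M$ is even, so that $v_p(H_p(d;M))=1$, and it is the $n=2$ term when $M$ is odd, so that $v_p(H_p(d;M))=2$. I would first record these leading terms,
$$H_p(d;M)\equiv M\,L_{p}(M+1,\omega^{-M})\,dp\ (\mathrm{mod}\ p^{3})\quad(M\ \mathrm{even}),$$
$$H_p(d;M)\equiv -\tfrac{M(M+1)}{2}\,L_{p}(M+2,\omega^{-M-1})\,(dp)^{2}\ (\mathrm{mod}\ p^{4})\quad(M\ \mathrm{odd}),$$
and then convert the occurring values $L_{p}(M+1,\omega^{-M})$ and $L_{p}(M+2,\omega^{-M-1})$ into Bernoulli numbers by (\ref{fmain327}), taking $k=1$ for the leading coefficient and $k=2$ for the mod-$p^2$ refinement; reducing the resulting denominators modulo the working power of $p$ turns them into the shapes $mj+1,\ mj+2,\ p+mn,\ p+mn+1$ that appear in the statement. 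The hypothesis $p>mn+4$ is exactly what keeps all the arguments inside the range $m+k\leq p-1$ in which (\ref{fmain327}) applies.

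With the valuations in hand the combinatorics collapse: in every case only $r=1$ and $r=2$ survive, i.e. the single term $a_n$ together with the quadratic part $\tfrac12\sum_{j=1}^{n-1}a_j a_{n-j}$, since any product of three or more factors is divisible by a power of $p$ at least equal to the modulus. Concretely, when $m$ is even one keeps $a_n$ to mod $p^3$ and the $r=2$ products to leading order (mod $p^3$); when $mn$ is odd one keeps $a_n$ to mod $p^4$ and the $r=2$ products---which now carry an extra factor of $p$---to leading order (mod $p^4$); the mixed case $m$ odd, $n$ even goes through mod $p^3$ in the same manner. Substituting the Bernoulli evaluations of the previous step into $a_n$ and into the products $a_j a_{n-j}$ and collecting then produces the displayed right-hand sides.

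The step that makes the sums close up---and the one I expect to be the main obstacle in the write-up---is a second parity observation together with the attendant sign bookkeeping. In the quadratic part the factor coming from $H_p(d;mj)$ carries $B_{p-mj-1}$ or $B_{p-mj-2}$ according as $mj$ is even or odd, and $B_{2t+1}=0$ for $2t+1>1$; hence for each fixed parity of $m$ and $n$ exactly one parity of the summation index $j$ gives a nonzero term, and the complementary, vanishing, terms are precisely what allows the sum to be written over all $j=1,\dots,n-1$ rather than over the indices of a single parity. Pinning down which Bernoulli indices survive, and tracking the signs produced by the $(-1)^{j-1}$ present in (\ref{fmain2656d}) but absent in (\ref{fmain2657d})---this is exactly what separates the $M_p$ tables from the $\overline{M}_p$ tables, for instance the factor $(-1)^n$ in the even-$m$ case---is the only genuinely delicate part; once the vanishing pattern and the denominator reductions are fixed, the three cases follow by direct substitution.
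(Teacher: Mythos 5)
Your proposal is correct and is essentially the paper's own argument: the paper's entire proof is the single sentence ``Applying Corollary \ref{C1} to (\ref{fmain2656d}) and (\ref{fmain2657d})'', and your parity-based valuation estimates for $H_p(d;M)$ (coming from the vanishing of $L_p(k,\omega^{1-k})$ for even $k$), the resulting truncation of the exponential to the $r=1,2$ terms, and the observation that $B_{\text{odd}}=0$ lets the quadratic sums run over all $j=1,\dots,n-1$ are exactly the suppressed details. One remark worth recording: carrying out your computation in the case $m$ odd, $n$ even gives $\overline{M}_p(d,m;n)\equiv \tfrac{1}{n}H_p(d;mn)+\cdots$ versus $M_p(d,m;n)\equiv -\tfrac{1}{n}H_p(d;mn)+\cdots$ (for $n=2$ this is just $h_2=\tfrac{1}{2}(P_1^2+P_2)$ versus $e_2=\tfrac{1}{2}(P_1^2-P_2)$ with $P_1^2=O(p^4)$), so the $dp$-term in the $\overline{M}_p$ table for that case should read $+\tfrac{mB_{p^2-p-mn}}{p+mn}dp$; the sign printed in the corollary appears to be a misprint rather than a defect of your argument.
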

 \section{$p$-adic expansions of sums of
 Lemher's type}

This section is parallel to the previous section. First we generalize Washington's $p$-adic expansions to cover the sums of
 Lemher's type.
 \begin{theorem} \label{main4k}
Let $d$, $r$, $m$ be positive integers, with $(r, dp)=1$.
Then, when $m=1$,{\setlength{\arraycolsep}{0pt}
\begin{eqnarray}\label{fmain100sd}
&&\sum_{k=1 \atop
(k,p)=1}^{[dp/r]}\frac{1}{k}=-\frac{1}{p}\big(log_pr^{p-1}+\sum_{q|r \atop q\
prime}\frac{log_pq^{p-1}}{q-1}\big)-
\\
&&\frac{r}{\varphi(r)} \sum_{n=0 \atop f_{\psi}|r (\psi,n)\neq(\textbf{1},0)}^{\infty}\overline{\psi}(-dp)(-1)^{
n}d(\psi,r,-1-n)L_{p}(1+n,\psi\omega^{-n})(dp)^n,\nonumber
\end{eqnarray}
}and, when $m>1$,
{\setlength{\arraycolsep}{0pt}
\begin{eqnarray}\label{fmain100se}
&&\sum_{k=1 \atop
(k,p)=1}^{[dp/r]}\frac{1}{k^{m}}=L_{p}(m,\omega^{1-m})-
\\
&&\frac{r^{m}}{\varphi(r)}\sum_{{f_{\psi}| r}}\overline{\psi}(-dp)\sum_{n=0}^{\infty}{-m\choose
n}d(\psi,r,-m-n)L_{p}(m+n,\psi\omega^{1-m-n})(dp)^n,\nonumber
\end{eqnarray}
}where $f_{\psi}$ is the conductor of primitive character $\psi$,  and
$$d(\psi,r,n) =\prod_{q\mid \frac{r}{f_{\psi}}\atop q\
prime}(1-\psi(q)q^{n}).$$
\end{theorem}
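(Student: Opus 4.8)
The plan is to adapt Washington's derivation of Theorem~\ref{main4}, replacing the elementary power-sum evaluation he uses by the full Szmidt--Urbanowicz--Zagier identity of Lemma~\ref{lemma1}, which is exactly what is needed to carry the truncated upper limit $[dp/r]$ and the correction sum over characters modulo $r$. The first step is to realize the target sum as a $p$-adic limit of genuine power sums. For a sequence of positive integers $M=M_t\to+\infty$ with $M_t\equiv -m\pmod{(p-1)p^{t}}$ one has $M_t\to -m$ $p$-adically, and for each $k$ prime to $p$, writing $k=\omega(k)\langle k\rangle$ gives $\omega(k)^{M_t}=\omega(k)^{-m}$ together with $\langle k\rangle^{M_t}\to\langle k\rangle^{-m}$ by continuity of $s\mapsto\langle k\rangle^{s}=\exp(s\log_{p}\langle k\rangle)$; hence $k^{M_t}\to k^{-m}$ and $\sum_{k=1,(k,p)=1}^{[dp/r]}k^{-m}=\lim_{t}\sum_{k=1,(k,p)=1}^{[dp/r]}k^{M_t}$. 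Since $M\equiv -m\pmod{p-1}$, inserting the principal character $\chi_{0}$ modulo $p$ lets me write this truncated power sum as $\sum_{k=1}^{[dp/r]}\chi_{0}(k)k^{M}$, which is precisely the left-hand side of Lemma~\ref{lemma1} applied with character $\chi_{0}$ (of modulus $p$), with exponent $M+1$ in the role of that lemma's $m$, and with its $N$ equal to $dp$; the hypothesis $(r,dp)=1$ is exactly what the lemma requires.

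Next I would apply Lemma~\ref{lemma1}, which splits the sum into a principal term coming from $-B_{M+1,\chi_{0}}$ and a correction $\tfrac{1}{\varphi(r)}\sum_{\psi}\overline{\psi}(-dp)B_{M+1,\chi_{0}\psi}(dp)$, the inner sum running over Dirichlet characters $\psi$ modulo $r$. Three expansions then turn this into the stated series. (i) I expand each generalized Bernoulli polynomial $B_{M+1,\chi_{0}\psi}(dp)$ in ascending powers of its argument $dp\in p\mathbb{Z}_{p}$; convergence is immediate and this supplies the powers $(dp)^{n}$. (ii) I rewrite each imprimitive character $\chi_{0}\psi$ through the primitive character underlying $\psi$, whose conductor $f_{\psi}$ divides $r$; by (\ref{fmain31}) this introduces exactly the Euler product $d(\psi,r,\,\cdot\,)=\prod_{q\mid r/f_{\psi}}(1-\psi(q)q^{\,\cdot\,})$, whereas the factor attached to $p$ is $(1-\psi(p)p^{M})\to 1$ (because $p^{M}\to 0$) and is absorbed automatically on passing to the $p$-deprived values computed by $L_{p}$. (iii) I identify each Bernoulli number so produced with a $p$-adic $L$-value via (\ref{fmain321}); here $M\equiv -m\pmod{p-1}$ is what converts $\omega^{M+1-n}$ into $\omega^{1-m-n}$, while the argument $n-M\to m+n$, so the value appearing is $L_{p}(m+n,\psi\omega^{1-m-n})$. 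The three reciprocal/polynomial factors $\tfrac{1}{M+1}$ (from Lemma~\ref{lemma1}), ${M+1\choose n}$ (from step~(i)), and the index $M+1-n$ thrown up when (\ref{fmain321}) is solved for the Bernoulli number collapse into the single coefficient ${M\choose n}$, which tends to ${-m\choose n}$. Finally I let $t\to\infty$, using the analyticity and continuity of $L_{p}(s,\chi)$ for nontrivial $\chi$ together with the Kummer-type congruences of Lemma~\ref{lemma5} to justify exchanging the limit with the uniformly convergent series. This yields (\ref{fmain100se}) for $m>1$, the principal term tending to $L_{p}(m,\omega^{1-m})$.

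The main obstacle is the case $m=1$, and it is caused entirely by the pole of $L_{p}(s,\mathbf{1})$ at $s=1$ (residue $1-1/p$, as recorded after (\ref{fmain321})). Among all the values $L_{p}(m+n,\psi\omega^{1-m-n})$ that occur, the character is trivial and the argument equals $1$ precisely for $m=1$, $n=0$, $\psi=\mathbf{1}$; but then the principal term $L_{p}(1,\mathbf{1})$ is \emph{also} that same polar value, so for $m=1$ the formula must combine two individually singular contributions, and this combination is exactly the term excluded by the condition $(\psi,n)\neq(\mathbf{1},0)$ in (\ref{fmain100sd}). Before the limit these two pieces are finite integer power sums, so no pole is present; the singularity is an artifact of expressing the limit through $L_{p}$. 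I would therefore keep the two pieces together and compute their joint limit by expanding, to first order around $s=1$, the whole $s$-dependent coefficient that multiplies $L_{p}(s,\mathbf{1})$ --- the $r$-power factor, the reciprocal of the exponent, and the $p$-deprivation factors $\prod_{q\mid r}(1-q^{-s})$ --- so that the simple pole of $L_{p}(s,\mathbf{1})$ meets the simple zeros and derivatives of these coefficients; the residues cancel and a finite constant term survives.

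This constant term is where the $p$-adic logarithms come from: differentiating $r^{-s}$ and $\prod_{q\mid r}(1-q^{-s})$ at $s=1$ produces $\log_{p}$ of $p$-adic units, the $\tfrac{1}{q-1}$ weights arising from the geometric-series behaviour $(1-q^{-s})^{-1}$ of the Euler factors near the pole, and, after using $\log_{p}\langle r\rangle=\tfrac{1}{p-1}\log_{p}r^{p-1}$ together with the additivity (\ref{fmain2659xsws}), these assemble into exactly $-\tfrac{1}{p}\bigl(\log_{p}r^{p-1}+\sum_{q\mid r}\tfrac{\log_{p}q^{p-1}}{q-1}\bigr)$. Pinning down these constants --- the signs, the $\tfrac{1}{q-1}$ weights, and the overall $\tfrac1p$ --- is the genuinely delicate part of the argument; everything else is bookkeeping. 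Once it is done, (\ref{fmain100sd}) follows, and as a consistency check Theorem~\ref{main4} is recovered from the case $r=1$, where only $\psi=\mathbf{1}$ survives, $d(\mathbf{1},1,\,\cdot\,)=1$, and the principal term cancels the $n=0$ term of the series.
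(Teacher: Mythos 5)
Your proposal follows essentially the same route as the paper's own proof: both reduce the claim to the Szmidt--Urbanowicz--Zagier identity of Lemma~\ref{lemma1} applied to genuine power sums whose exponents are congruent to $-m$ modulo $(p-1)p^{t}$, pass to the limit by $p$-adic continuity/analyticity in the exponent, convert generalized Bernoulli numbers to $p$-adic $L$-values via (\ref{fmain31}) and (\ref{fmain321}), and obtain the $m=1$ logarithmic constant by grouping the two polar contributions (the principal term and the $(\psi,n)=(\mathbf{1},0)$ term) and extracting the finite part at the pole of $L_{p}(s,\mathbf{1})$. The only differences are cosmetic: the paper parametrizes the exponent as $m+s(p-1)$ and removes the multiples of $p$ by explicit subtraction, whereas you insert the principal character modulo $p$ into Lemma~\ref{lemma1} and use a sequence $M_{t}\to-m$.
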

\begin{proof}
Note that the term $-\frac{1}{p}\big(log_pr^{p-1}+\sum_{q|r \atop
q\ prime}\frac{log_pq^{p-1}}{q-1}\big)$ in (\ref{fmain100sd}) is
just the value of $$\Big(1-r^{s(p-1)}\prod_{q|r \atop q\
prime}\frac{1-q^{-1-s(p-1)}}{1-1/q}\Big)/(s(p-1))$$ at $s=0$.
Hence it suffices to prove the following identity for $m\geq1$ and
$s\in\mathbb{Z}_p$: {\setlength{\arraycolsep}{0pt}
\begin{eqnarray}\label{fmain100}
&&\sum_{k=1 \atop
(k,p)=1}^{[dp/r]}\frac{1}{k^{m+s(p-1)}}=L_{p}(m+s(p-1),\omega^{1-m}) \nonumber
\\
&&-\frac{r^{m+s(p-1)}}{\varphi(r)}\sum_{{f_{\psi}| r}}\overline{\psi}(-dp)\sum_{n=0}^{\infty}{-m-s(p-1)\choose
n}d(\psi,r,-m-s(p-1)-n) \nonumber
\\
&&L_{p}(m+s(p-1)+n,\psi\omega^{1-m-n})(dp)^n,\nonumber \\
&&=(m+s(p-1)-1)L_{p}(m+s(p-1),\omega^{1-m})\frac{\big(1-r^{m+s(p-1)-1}\prod_{q|r \atop q\
prime}\frac{1-q^{-m-s(p-1)}}{1-1/q}\big)}{(m+s(p-1)-1)} \nonumber
\\
&&-\frac{r^{m+s(p-1)}}{\varphi(r)} \sum_{n=0 \atop f_{\psi}|r (\psi,n)\neq(\textbf{1},0)}^{\infty}\overline{\psi}(-dp){-m-s(p-1)\choose
n}d(\psi,r,-m-s(p-1)-n) \nonumber
\\
&&L_{p}(m+s(p-1)+n,\psi\omega^{1-m-n})(dp)^n,\nonumber
\end{eqnarray}
} where $a^{s(p-1)}=(a^{p-1})^s$. It is easy to see that both
sides are analytic on $\mathbb{Z}_p$. We first prove the case
$m+s(p-1)$ is negative integer, and the theorem follows by
continuity. We note that for $n>0$,
$${n-1-s(p-1)\choose
n}L_{p}(1+s(p-1),\textbf{1})|_{s=0}=-\frac{1}{n}(1-\frac{1}{p}).$$
By Lemma \ref{lemma1}, when $m<0$,
  {\setlength{\arraycolsep}{0pt}
\begin{eqnarray}\label{fmain2655}
&&\sum_{k=1 \atop
(k,p)=1}^{[dp/r]}\frac{1}{k^m}\nonumber
\\
&=&\sum_{k=1}^{[dp/r]}\frac{1}{k^m}-\sum_{k=1}^{[d/r]}\frac{1}{p^mk^m} \nonumber
\\
&=&-(1-p^{-m})\frac{B_{-m+1}}{-m+1}+\frac{r^m}{\varphi(r)}d(\textbf{1},r,-1)\big(\frac{1-\frac{1}{p}}{-m+1}\big)(dp)^{-m+1}\nonumber
\\
&+&\frac{r^m}{\varphi(r)}\sum_{\psi\neq\textbf{1} \atop f_{\psi}| r}\overline{\psi}(-dp)\sum_{n=0}^{-m}{-m\choose
n}d(\psi,r,-m-n) (1-\psi(p)p^{-m-n})\frac{B_{-m-n+1,\psi}}{-m-n+1}c  \nonumber\\
&=&L_{p}(m,\omega^{1-m})-\frac{r^m}{\varphi(r)}d(\textbf{1},r,-1)(dp)^{-m+1}{-m-s(p-1)\choose
-m+1}L_{p}(1+s(p-1),\textbf{1})|_{s=0} \nonumber
\\
&-&\frac{r^m}{\varphi(r)}\sum_{f_{\psi}| r}\overline{\psi}(-dp)\sum_{n=0}^{-m}{-m\choose
n}d(\psi,r,-m-n)L_{p}(m+n,\psi\omega^{1-m-n})(dp)^n.  \nonumber
\end{eqnarray}
} Hence the theorem follows.
\end{proof}

Combining Theorem \ref{main4k} with  (\ref{fmain327}), implies the following generalization of Lemher's congruences:

\begin{corollary} \label{C2}
Let $p$, $d$, $r$ and $m$ be as before. Assume that $(\varphi(r),
p)=1$. Let $j$ be another positive integer, with $j+m\leq p-1$.
Then, when $m=1$,
 {\setlength{\arraycolsep}{0pt}
\begin{eqnarray}\label{fmain100}
\sum_{k=1 \atop
(k,p)=1}^{[dp/r]}\frac{1}{k}&\equiv&-\frac{r}{\varphi(r)}\sum_{n=0,\atop f_{\psi}| r, (\psi,n)\neq(\textbf{1},0)}^{j-1}\overline{\psi}(-dp)d(\psi,r,-1-n)B_p(1+n,j-n;\psi)(-dp)^n       \nonumber
\\
&-& \frac{1}{p}\big(log_pr^{p-1}+\sum_{q|r \atop q\
prime}\frac{log_pq^{p-1}}{q-1}\big)\  \   (mod\ p^j),\nonumber
\end{eqnarray}
}
and when $m>1$,
 {\setlength{\arraycolsep}{0pt}
\begin{eqnarray}\label{fmain100}
\sum_{k=1 \atop
(k,p)=1}^{[dp/r]}\frac{1}{k^m}&\equiv&-\frac{r^m}{\varphi(r)}\sum_{f_{\psi}| r}\overline{\psi}(-dp)\sum_{n=0}^{j-1}{-m\choose
n}d(\psi,r,-m-n)B_p(m+n,j-n;\psi)(dp)^n\nonumber
\\
&+&B_p(m,j;\textbf{1}) \  \  (mod\ p^j).\nonumber
\end{eqnarray}
}The above congruences remain valid if we replace some
$B_p(m,k;\chi)$ with $-\frac{B_{p^{k-1}(p-1)+1-m,\chi
}}{p^{k-1}(p-1)+1-m}$.\end{corollary}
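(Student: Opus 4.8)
The plan is to read off both congruences directly from the exact $p$-adic expansions of Theorem~\ref{main4k}, by truncating the infinite series at $n=j-1$, discarding the tail $n\ge j$ modulo $p^j$, and replacing each surviving special value $L_p(m+n,\psi\omega^{1-m-n})$ by its Bernoulli approximation from (\ref{fmain327}). Concretely, I would start from the identity (\ref{fmain100se}) for $m>1$ (and (\ref{fmain100sd}) for $m=1$), keep the indices $0\le n\le j-1$, and argue separately that the remainder of the sum is divisible by $p^j$. In the $m>1$ case the leading value $L_p(m,\omega^{1-m})$ standing outside the double sum is rewritten as $B_p(m,j;\mathbf{1})$ using (\ref{fmain327}) with $\chi=\mathbf{1}$ and $k=j$, which produces the stated leading term.

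For the surviving terms the factor $(dp)^n$ carries $p^n$, so it suffices to know each $L_p(m+n,\psi\omega^{1-m-n})$ only modulo $p^{j-n}$. Here I would invoke (\ref{fmain327}) after the substitution $m\mapsto m+n$, $k\mapsto j-n$, $\chi\mapsto\psi$, giving $L_p(m+n,\psi\omega^{1-m-n})\equiv B_p(m+n,j-n;\psi)\pmod{p^{j-n}}$. The hypotheses enter exactly at this step: $j+m\le p-1$ guarantees $(m+n)+(j-n)\le p-1$, and since $\psi$ has conductor $f_\psi\mid r$ with $(r,p)=1$ while $\omega$ has conductor dividing $p$, the conductor of $\psi\omega^{1-m-n}$ divides $rp$ and so is not divisible by $p^2$, as required. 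The assumption $(\varphi(r),p)=1$ makes $\varphi(r)\in\mathbb{Z}_p^*$ and the values of $\psi$ into $p$-adic units, so the prefactor $r^m/\varphi(r)$, the binomial $\binom{-m}{n}$, and $d(\psi,r,-m-n)=\prod_{q\mid r/f_\psi}(1-\psi(q)q^{-m-n})$ all lie in $\mathbb{Z}_p$ (the latter because $q\nmid p$). Multiplying the congruence by these $p$-integral factors and by $(dp)^n$ then preserves validity modulo $p^j$.

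The main obstacle is the tail estimate for $n\ge j$, which I would split according to whether $\psi\omega^{1-m-n}$ is trivial. When it is nontrivial, Lemma~\ref{lemma49} gives $L_p(m+n,\psi\omega^{1-m-n})\in\mathbb{Z}_p$, so the term is a $p$-adic integer times $p^n$ and vanishes modulo $p^j$. The delicate case is the trivial character, where Lemma~\ref{lemma49} only yields $p(1-s)L_p(s,\mathbf{1})\in\mathbb{Z}_p$, so the value may carry a denominator $p$. The key observation is that triviality forces $\psi=\mathbf{1}$ and $(p-1)\mid(m+n-1)$; writing $m+n-1=k(p-1)$ with $k\ge1$, the $p$-adic valuation of the term is at least $n-1-v_p(m+n-1)=k(p-1)-m-v_p(k)\ge p-1-m\ge j$, the last inequality being precisely the hypothesis $j+m\le p-1$, and the bound only grows with $k$. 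Hence these terms vanish modulo $p^j$ as well. The single genuine pole of $L_p(s,\mathbf{1})$ at $s=1$ corresponds exactly to the index $(\psi,n)=(\mathbf{1},0)$ excluded from the $m=1$ sum; it has already been isolated in Theorem~\ref{main4k} as the explicit $\log_p$ correction term and therefore never enters the truncation argument.

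Finally, the concluding sentence — that any $B_p(m,k;\chi)$ may be swapped for $-B_{p^{k-1}(p-1)+1-m,\chi}/(p^{k-1}(p-1)+1-m)$ — follows termwise from the second congruence in (\ref{fmain327}). Since the block indexed by $n$ is weighted by $p^n$, its Bernoulli approximation is only needed modulo $p^{j-n}$, which is exactly the modulus at which $B_p(m+n,j-n;\psi)$ and $-B_{p^{j-n-1}(p-1)+1-m-n,\psi}/(p^{j-n-1}(p-1)+1-m-n)$ agree. I expect the only technically subtle point in the whole argument to be the trivial-character tail bound above; everything else is bookkeeping with the series from Theorem~\ref{main4k} and the congruences in (\ref{fmain327}).
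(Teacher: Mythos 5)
Your proposal is correct and follows exactly the route the paper intends: the paper offers no written proof beyond the assertion that the corollary follows by ``combining Theorem~\ref{main4k} with (\ref{fmain327})'', and your argument is precisely that combination, carried out carefully. The one point the paper leaves entirely implicit --- the tail estimate for the trivial-character terms with $n\ge j$, where $L_p(s,\textbf{1})$ can contribute a denominator --- is handled correctly by your valuation bound $k(p-1)-m-v_p(k)\ge p-1-m\ge j$.
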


Let $E_n$(Euler numbers) be defined by
\begin{equation}\label{fmain15} \frac{2e^{z}}{e^{2z}+1}=\sum_{n=0}^{\infty}E_nz^n/n!.\end{equation}
Then it is easy to see that
\begin{equation}\frac{B_{n+1,\eta}}{n+1}=-\frac{E_{n}}{2}=\begin{cases} 2^{2n+1}\frac{B_{n+1}(1/4)}{n+1}& \text{if}\ n\ is\ even \\
0& \text{if}\ n\ is\ odd \end{cases}\end{equation}
where  $\eta$ is the unique quadratic character module $4$. Let $r=2$, or $4$ in  Corollary \ref{C2}, we get
\begin{corollary} \label{Cj2}
Assume that $(2, d)=1$ and $j+m\leq p-1$. Then, when $m>1$, we have

 {\setlength{\arraycolsep}{0pt}
\begin{eqnarray}\label{fmain26575}
\sum_{k=1 \atop
(k,p)=1}^{[dp/2]}\frac{1}{k^m}&\equiv& \frac{B_{p^{j-1}(p-1)+1-m}}{p^{j-1}+m-1}\nonumber \\
&-&\sum_{n=0}^{j-1}(2^m-2^{-n}){-m\choose
n}\frac{B_{p^{j-n-1}(p-1)+1-m-n}}{p^{j-n-1}+m+n-1}(dp)^n\  (mod\ p^j), \nonumber
\end{eqnarray}}
and
 {\setlength{\arraycolsep}{0pt}
\begin{eqnarray}\label{fmain26575}
\sum_{k=1 \atop
(k,p)=1}^{[dp/4]}\frac{1}{k^m}&\equiv&\frac{B_{p^{j-1}(p-1)+1-m}}{p^{j-1}+m-1}\nonumber \\
&-&\sum_{n=0}^{j-1}(2^{2m-1}-2^{m-n-1}){-m\choose
n}\frac{B_{p^{j-n-1}(p-1)+1-m-n}}{p^{j-n-1}+m+n-1}(dp)^n\nonumber \\
&+&(-1)^{\tfrac{dp-1}{2}}2^{2m-2}\sum_{n=0}^{j-1}{-m\choose
n}E_{p^{j-n-1}(p-1)-m-n}(dp)^n\  (mod\ p^j).\nonumber
\end{eqnarray}}
When $m=1$, we have
 {\setlength{\arraycolsep}{0pt}
\begin{eqnarray}\label{fmain26574}
 \sum_{k=1 \atop
(k,p)=1}^{[dp/2]}\frac{1}{k}&\equiv&-\frac{2 log_p2^{p-1}}{p}-\sum_{n=1}^{j-1}(2-2^{-n})\frac{B_{p^{j-n-1}(p-1)-n}}{p^{j-n-1}+n}(-dp)^n  \nonumber  \\
&\equiv&  2\sum_{n=1}^{j}(-1)^n\frac{q_p(2)^{n}p^{n-1}}{n} \nonumber  \\
&-&\sum_{n=1}^{j-1}(2-2^{-n})\frac{B_{p^{j-n-1}(p-1)-n}}{p^{j-n-1}+n}(-dp)^n \  (mod\ p^j),\nonumber
\end{eqnarray}}
and  {\setlength{\arraycolsep}{0pt}
\begin{eqnarray}\label{fmain26574}
\sum_{k=1 \atop
(k,p)=1}^{[dp/4]}\frac{1}{k}&\equiv&-\frac{3 log_p2^{p-1}}{p}-\sum_{n=1}^{j-1}(2-2^{-n})\frac{B_{p^{j-n-1}(p-1)-n}}{p^{j-n-1}+n}(-dp)^n \nonumber \\
&-&(-1)^{\tfrac{dp-1}{2}}\sum_{n=0}^{j-1}E_{p^{j-n-1}(p-1)-n-1}(dp)^n  \nonumber  \\
&\equiv&  3\sum_{n=1}^{j}(-1)^n\frac{q_p(2)^{n}p^{n-1}}{n}-\sum_{n=1}^{j-1}(2-2^{-n})\frac{B_{p^{j-n-1}(p-1)-n}}{p^{j-n-1}+n}(-dp)^n \nonumber  \\
&-&(-1)^{\tfrac{dp-1}{2}}\sum_{n=0}^{j-1}E_{p^{j-n-1}(p-1)-n-1}(dp)^n\  (mod\ p^j);\nonumber
\end{eqnarray}}
\end{corollary}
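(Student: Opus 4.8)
The plan is to derive all four families of congruences by specializing Corollary \ref{C2} to $r=2$ and $r=4$ and then simplifying the characters and the (generalized) Bernoulli numbers that appear. The first task is to enumerate the primitive characters $\psi$ with $f_\psi\mid r$. For $r=2$ one has $\varphi(2)=1$, and the only such character is the trivial one $\textbf{1}$; for $r=4$ one has $\varphi(4)=2$, and there are exactly two such characters, namely $\textbf{1}$ (of conductor $1$) and the quadratic character $\eta$ (of conductor $4$). In each case I would compute the three ingredients entering Corollary \ref{C2}: the factor $d(\psi,r,n)$, the value $\overline{\psi}(-dp)$, and the prefactor $r^m/\varphi(r)$. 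For $\psi=\textbf{1}$ the only prime dividing $r$ is $2$, so $d(\textbf{1},r,-m-n)=1-2^{-m-n}$, and multiplying by $r^m/\varphi(r)$ collapses the coefficient to $2^m-2^{-n}$ when $r=2$ and to $2^{2m-1}-2^{m-n-1}$ when $r=4$; for $\psi=\eta$ the ratio $r/f_\eta=1$ gives an empty product $d(\eta,4,n)=1$, while $\overline{\eta}(-dp)=\eta(-dp)=-(-1)^{(dp-1)/2}$ since $\eta$ is real and $\eta(-1)=-1$.

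The second task is to turn the quantities $B_p(m+n,j-n;\psi)$ into the explicit Bernoulli and Euler numbers displayed in the statement. For $\psi=\textbf{1}$ I would invoke (\ref{fmain327}) to replace $B_p(m+n,j-n;\textbf{1})$ by $-B_{N}/N$ with $N=p^{j-n-1}(p-1)+1-m-n$, and then reduce the denominator: since $-N=-p^{j-n}+p^{j-n-1}+m+n-1\equiv p^{j-n-1}+m+n-1\pmod{p^{j-n}}$ and this residue is a $p$-unit under the hypothesis $j+m\le p-1$, one gets $-B_N/N\equiv B_N/(p^{j-n-1}+m+n-1)\pmod{p^{j-n}}$, which is enough once the term is weighted by $(dp)^n$; the standalone summand $B_p(m,j;\textbf{1})$ is treated identically and produces the leading fraction $B_{p^{j-1}(p-1)+1-m}/(p^{j-1}+m-1)$. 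For $\psi=\eta$ I would first apply (\ref{fmain327}) with the character $\eta$ (legitimate because $p^2$ does not divide the conductor of $\eta\omega^{1-m-n}$, which divides $4p$), and then use the displayed identity $B_{n+1,\eta}/(n+1)=-E_n/2$ to rewrite the generalized Bernoulli number as an Euler number $E_{p^{j-n-1}(p-1)-m-n}$. Combining this with the prefactor $4^m/2$ and the sign of $\eta(-dp)$ produces exactly the term $(-1)^{(dp-1)/2}2^{2m-2}\sum_n\binom{-m}{n}E_{p^{j-n-1}(p-1)-m-n}(dp)^n$, and the analogous Euler term for $m=1$.

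Finally, for the two $m=1$ congruences I would evaluate the logarithmic correction term of Corollary \ref{C2}: the expression $-\tfrac1p\bigl(\log_pr^{p-1}+\sum_{q\mid r}\tfrac{\log_pq^{p-1}}{q-1}\bigr)$ reduces to $-\tfrac{2}{p}\log_p2^{p-1}$ for $r=2$ and to $-\tfrac{3}{p}\log_p2^{p-1}$ for $r=4$, and expanding $\log_p2^{p-1}=\log_p(1+pq_p(2))$ through (\ref{fmain312}) yields the series $2\sum_{n=1}^{j}(-1)^nq_p(2)^np^{n-1}/n$ and $3\sum_{n=1}^{j}(-1)^nq_p(2)^np^{n-1}/n$ modulo $p^j$. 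The main obstacle I anticipate is the bookkeeping of signs and parities rather than any deep difficulty: one must check that odd-index Euler numbers vanish, so that the factor $(-dp)^n$ appearing in the $m=1$ case of Corollary \ref{C2} may be rewritten as $(dp)^n$ in the stated result (the surviving indices force $(-1)^n=-1$, which combines with $\eta(-dp)$ to give the displayed sign), and one must verify throughout that the Bernoulli and generalized Bernoulli numbers in play are $p$-integral and that every denominator reduced modulo a power of $p$ is a $p$-unit, both of which follow from the range restriction $j+m\le p-1$.
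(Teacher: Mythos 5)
Your proposal is correct and follows exactly the route the paper intends: the paper derives this corollary simply by setting $r=2$ and $r=4$ in Corollary \ref{C2} and invoking the identity $B_{n+1,\eta}/(n+1)=-E_n/2$, and your computations of the character sums, the $d(\psi,r,n)$ factors, the sign $\eta(-dp)=-(-1)^{(dp-1)/2}$, and the parity argument converting $(-dp)^n$ to $(dp)^n$ via the vanishing of odd-index Euler numbers supply precisely the details the paper leaves implicit.
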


Let $p$, $d$, $r$ be as before and let $c$ be a nonzero integer. Set
$$H_p(d,r;m)=\sum_{k=1 \atop
(k,p)=1}^{[dp/r]}\frac{1}{k^m}.$$
As in Section 3, we have the following  $p$-adic expansion of ${cp+[dp/r]\choose [dp/r]}/{c+[d/r]\choose [d/r]}$
 \begin{theorem} \label{main1hz}
 {\setlength{\arraycolsep}{0pt}
\begin{eqnarray}\label{fmain2655}
&&{cp+[dp/r]\choose [dp/r]}\Big/{c+[d/r]\choose [d/r]}=
=exp\Big(\sum_{m=1}^{\infty}(-1)^{m-1}H_p(d,r;m)c^mp^m/m\Big)
\nonumber
\\
&&=exp\Big(-c log_pr^{p-1}-c\sum_{q|r \atop q\
prime}\frac{log_pq^{p-1}}{q-1}\Big)\cdot
\nonumber   \\
&&exp\big(\sum_{k=3}^{\infty}L_{p}(k,\omega^{1-k})c^{k}p^{k}/k\big)\cdot
\nonumber   \\
&&exp\big(\frac{-1}{\varphi(r)}\sum_{k=3}^{\infty}((cr+d)^k-d^{k})d(\textbf{1},r,-k)L_{p}(k,\omega^{1-k})p^{k}/k)\big) \cdot
\nonumber   \\
 &&exp\big(\frac{-1}{\varphi(r)}\sum_{\psi\neq\textbf{1} \atop f_{\psi}| r}\overline{\psi}(dp)\sum_{k=1}^{\infty}((cr+d)^k-d^{k})d(\psi,r,-k)L_{p}(k,\psi\omega^{1-k})p^{k}/k)\big)     \nonumber
\end{eqnarray}
} \end{theorem}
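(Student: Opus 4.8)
The plan is to follow the proof of Theorem \ref{CC5} essentially verbatim, the only genuinely new inputs being the Lehmer-type expansion of Theorem \ref{main4k} and the extra character sum it produces. First I would reduce the binomial ratio to a product over the $p$-adic units. Writing $N=[dp/r]$ and $M=[d/r]$, the identities ${cp+N\choose N}=\prod_{k=1}^{N}(1+cp/k)$ and ${c+M\choose M}=\prod_{k=1}^{M}(1+c/k)$ are immediate. The key elementary observation is the nested floor identity $\lfloor\lfloor dp/r\rfloor/p\rfloor=\lfloor d/r\rfloor=M$, so the multiples of $p$ in $\{1,\dots,N\}$ are exactly $p,2p,\dots,Mp$; for $k=jp$ one has $1+cp/k=1+c/j$, and these factors reproduce precisely the denominator ${c+M\choose M}$. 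Cancelling them gives
$${cp+[dp/r]\choose [dp/r]}\Big/{c+[d/r]\choose [d/r]}=\prod_{k=1,\ (k,p)=1}^{[dp/r]}\Big(1+\frac{cp}{k}\Big).$$
Applying $\exp$ of the $p$-adic logarithm and expanding $\log_p(1+cp/k)=\sum_{m\geq1}(-1)^{m-1}c^mp^m/(mk^m)$, exactly as in Theorem \ref{CC5}, collapses the sum over $k$ into $H_p(d,r;m)$ and yields the first displayed line, $\exp\big(\sum_{m\geq1}(-1)^{m-1}H_p(d,r;m)c^mp^m/m\big)$.

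Next I would substitute the expansions of Theorem \ref{main4k} into this exponent and split it into four pieces. The special summand $-\frac1p\big(\log_p r^{p-1}+\sum_{q\mid r}\frac{\log_p q^{p-1}}{q-1}\big)$ occurs only at $m=1$ and, multiplied by $cp$, gives exactly the first exponential factor. The diagonal term $L_p(m,\omega^{1-m})$ (present only for $m>1$) contributes $\sum_{m\geq2}(-1)^{m-1}L_p(m,\omega^{1-m})c^mp^m/m$; since $L_p(k,\omega^{1-k})=0$ for even $k$, only odd $k\geq3$ survive, where $(-1)^{m-1}=1$, producing the second factor $\exp\big(\sum_{k\geq3}L_p(k,\omega^{1-k})c^kp^k/k\big)$.

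For the remaining double sum I would reindex by $k=m+n$ and use ${-m\choose k-m}=(-1)^{k-m}{k-1\choose m-1}$ together with $\frac1m{k-1\choose m-1}=\frac1k{k\choose m}$; the binomial theorem then collapses the inner sum over $m$ into $\frac{(-1)^{k-1}}{k}\big((cr+d)^k-d^k\big)$. The crucial simplification is the sign: $L_p(k,\psi\omega^{1-k})$ vanishes unless $(\psi\omega^{1-k})(-1)=1$, i.e. $\psi(-1)=(-1)^{k-1}$, so that on the support of the nonzero terms $(-1)^{k-1}\overline\psi(-dp)=\overline\psi(dp)$. Separating $\psi=\textbf{1}$ from $\psi\neq\textbf{1}$ then yields the third and fourth factors: for $\psi=\textbf{1}$ the even $k$ again drop out, and the term $(\psi,n)=(\textbf{1},0)$ excluded in Theorem \ref{main4k} (the pole $k=1$) is precisely what forces the sum to begin at $k=3$, matching the stated factor; for $\psi\neq\textbf{1}$ there is no pole and $k$ runs from $1$, giving the last factor with $\overline\psi(dp)$.

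I expect the main obstacle to be the bookkeeping of the last step rather than any deep point: reconciling the alternating sign $(-1)^{m-1}$ of the exponent with the sign-free target factors hinges entirely on the two vanishing properties of $L_p$ (at even $k$ for $\psi=\textbf{1}$, and at odd characters in general), and one must carefully isolate the $m=1$ pole term and the logarithmic term so that factor three begins at $k=3$ and the first factor appears cleanly. Once these are tracked, exponentiating the sum of the four pieces gives the product of exponentials, and the analytic substitution itself is routine given Theorem \ref{main4k}.
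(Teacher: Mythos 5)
Your proposal is correct and follows essentially the same route as the paper, which states Theorem \ref{main1hz} without proof beyond the remark ``as in Section 3'': reduce the ratio to $\prod_{(k,p)=1}(1+cp/k)$, exponentiate $\log_p$, substitute the expansions of Theorem \ref{main4k}, and reindex by $k=m+n$. Your bookkeeping of the signs via the parity vanishing of $L_p(k,\psi\omega^{1-k})$, the isolation of the $m=1$ logarithmic term, and the explanation of why the $\psi=\textbf{1}$ factor starts at $k=3$ are all accurate and supply exactly the details the paper omits.
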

The above expansion, together with (\ref{fmain327}), immediately implies many known congruence about ${p-1\choose [p/r]}$ ($r>1$) \cite{ca,pan,sun7,sun8}. For example, setting  $c=-1$, $d=1$, $r=2$, or $4$ in Theorem \ref{main1hz}, we get:
\begin{corollary} \label{Cs1}
For $p>7$, {\setlength{\arraycolsep}{0pt}
\begin{eqnarray}\label{fmainxz2655}
&&(-1)^{\frac{p-1}{2}}4^{-p+1}{p-1\choose
\frac{p-1}{2}}\equiv1+\frac{1}{4}\frac{B_{p^{5}-p^4-2}}{p^{4}+2}p^3
+\frac{3}{16}\frac{B_{p^{3}-p^2-4}}{p^{2}+4}p^5    \nonumber
\\
&&+\frac{1}{16}\frac{B_{p^{2}-p-2}^2}{2(p+2)^2}p^6
+\frac{9}{64}\frac{B_{p-7}}{7}p^7  \  \    (mod\ p^8).  \nonumber
\end{eqnarray}
}  \end{corollary}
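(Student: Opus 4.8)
The plan is to specialize Theorem \ref{main1hz} to $c=-1$, $d=1$, $r=2$ and then expand the resulting product of exponentials modulo $p^8$. First I would rewrite the left-hand side in closed form. Since $p$ is odd, $[dp/r]=(p-1)/2$ and $cp+[dp/r]=-(p+1)/2$, so the identity $\binom{-n}{k}=(-1)^k\binom{n+k-1}{k}$ gives $\binom{cp+[dp/r]}{[dp/r]}=(-1)^{(p-1)/2}\binom{p-1}{(p-1)/2}$, while $\binom{c+[d/r]}{[d/r]}=\binom{-1}{0}=1$. Hence the left side of Theorem \ref{main1hz} is exactly $(-1)^{(p-1)/2}\binom{p-1}{(p-1)/2}$.

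Next I would evaluate the four exponential factors on the right. Because $\varphi(2)=1$ and the only prime dividing $r=2$ is $2$ itself, the first factor is $\exp\big(-2c\,\log_p 2^{p-1}\big)=\exp\big(\log_p(2^{p-1})^2\big)=4^{p-1}$, using additivity of $\log_p$ as in (\ref{fmain2659xsws}); I move this factor to the left, producing the stated $4^{-p+1}$. Since no nontrivial primitive character has conductor dividing $2$, the last (nontrivial-character) factor equals $1$. With $cr+d=-1$ and $d(\textbf{1},2,-k)=1-2^{-k}$, the two middle factors combine into a single exponential whose argument is $\sum_{k\ge 3}\big[(-1)^k-((-1)^k-1)(1-2^{-k})\big]L_{p}(k,\omega^{1-k})p^{k}/k$. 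Using that $L_{p}(k,\omega^{1-k})=0$ for even $k$, the bracket collapses, for odd $k$, to $1-2^{1-k}$, so the argument becomes $X:=\sum_{k\ge 3,\ \mathrm{odd}}(1-2^{1-k})L_{p}(k,\omega^{1-k})p^{k}/k$.

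It then remains to expand $\exp(X)$ modulo $p^8$. Each $L_{p}(k,\omega^{1-k})$ is a $p$-adic integer (Lemma \ref{lemma49}) and $X$ starts at order $p^3$, so only $k=3,5,7$ survive and $X^3$ has order $p^9$; thus $\exp(X)\equiv 1+X+\tfrac12 X^2\pmod{p^8}$. Writing $X=Ap^3+Bp^5+Cp^7$ modulo $p^9$ with $A=\tfrac14 L_{p}(3,\omega^{-2})$, $B=\tfrac{3}{16}L_{p}(5,\omega^{-4})$, $C=\tfrac{9}{64}L_{p}(7,\omega^{-6})$, I get $\tfrac12X^2\equiv\tfrac12A^2p^6\pmod{p^8}$, so $\exp(X)\equiv 1+Ap^3+Bp^5+\tfrac12A^2p^6+Cp^7\pmod{p^8}$. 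Finally I would invoke (\ref{fmain327}) at the precision appropriate to each term — $L_{p}(3,\omega^{-2})$ modulo $p^5$ for $Ap^3$ and modulo $p^2$ for $\tfrac12A^2p^6$, $L_{p}(5,\omega^{-4})$ modulo $p^3$, and $L_{p}(7,\omega^{-6})$ modulo $p$ — replacing each value by the Bernoulli quotient $B_{p^{k-1}(p-1)+1-m}/(p^{k-1}+m-1)$, which produces the indices $p^5-p^4-2$, $p^3-p^2-4$, $p^2-p-2$, $p-7$ and the stated coefficients $\tfrac14,\tfrac{3}{16},\tfrac{1}{16},\tfrac{9}{64}$.

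The main obstacle I anticipate is the precise bookkeeping in two places. First, collapsing the two trivial-character factors into the clean coefficient $1-2^{1-k}$ relies essentially on the vanishing of $L_p$ at even arguments; getting the algebra of the bracket right is where a sign or a factor of $2$ could easily slip. Second, I must track term by term the exact $p$-adic precision to which each $L$-value is needed so that (\ref{fmain327}) yields precisely the right Bernoulli index and denominator — in particular the quadratic term $\tfrac12A^2p^6$, which is the sole contribution from $X^2$ and is responsible for the squared Bernoulli number $B_{p^2-p-2}^2$ with its denominator $2(p+2)^2$. Once these are handled, the remaining steps are routine substitutions.
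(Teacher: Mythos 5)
Your proposal is correct and follows exactly the route the paper intends: the paper derives Corollary \ref{Cs1} precisely by setting $c=-1$, $d=1$, $r=2$ in Theorem \ref{main1hz} and then invoking (\ref{fmain327}), and your bookkeeping (the factor $4^{p-1}$ from the logarithmic term, the collapse of the trivial-character contributions to $1-2^{1-k}$ for odd $k$, the vanishing of the nontrivial-character factor for $r=2$, and the precision $p^5,p^3,p^2,p$ for the respective $L$-values) reproduces all the stated coefficients and Bernoulli indices. No gaps.
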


\begin{corollary} \label{Cs1v}
For $p>5$, {\setlength{\arraycolsep}{0pt}
\begin{eqnarray}\label{fmainxz2655}
&&(-1)^{[\frac{p}{4}]}2^{-3p+3}{p-1\choose
[\frac{p}{4}]}\equiv1-(-1)^{\frac{p-1}{2}}E_{p^{4}-p^3-2}p^2+\frac{15}{4}\frac{B_{p^{3}-p^2-2}}{p^{2}+2}p^3
\nonumber
\\
&& -(-1)^{\frac{p-1}{2}}5E_{p^{2}-p-4}p^4 +\frac{75}{16}B_{p-5}p^5   \nonumber
\\
&& \frac{1}{2}(2E_{p-3}-E_{2p-4})^2p^4-(-1)^{\frac{p-1}{2}}\frac{5}{4}E_{p-3}B_{p-3}p^5 \  \    (mod\ p^6).  \nonumber
\end{eqnarray}
}  \end{corollary}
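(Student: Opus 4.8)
The plan is to apply Theorem \ref{main1hz} with $c=-1$, $d=1$, $r=4$ and expand the four exponentials on its right-hand side modulo $p^6$. First I would pin down the left-hand side: here $[dp/r]=[p/4]$ and $[d/r]=0$, so the denominator is ${-1\choose0}=1$ and the numerator is ${-p+[p/4]\choose[p/4]}$. Writing ${-p+[p/4]\choose[p/4]}=\prod_{i=1}^{[p/4]}\frac{i-p}{i}=(-1)^{[p/4]}{p-1\choose[p/4]}$ shows that the left side of Theorem \ref{main1hz} is precisely $(-1)^{[p/4]}{p-1\choose[p/4]}$, so the claimed congruence will follow once the right side is expanded and multiplied by the normalizing factor $2^{-3p+3}$.

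The next step is the elementary first exponential. Since $4$ has only the prime divisor $2$ and $\log_p4^{p-1}=2\log_p2^{p-1}$, it equals $\exp(3\log_p2^{p-1})=2^{3p-3}$, which cancels the prefactor $2^{-3p+3}$ in the statement. For the three remaining exponentials I would observe that the only characters with conductor dividing $4$ are the trivial character $\textbf{1}$ and the quadratic character $\eta$ modulo $4$, with $d(\textbf{1},4,-k)=1-2^{-k}$ and $d(\eta,4,-k)=1$; moreover $\overline{\eta}(p)=(-1)^{(p-1)/2}$, $cr+d=-3$ and $\varphi(4)=2$. Using that $L_p(k,\omega^{1-k})=0$ for even $k$ and that $L_p(k,\eta\omega^{1-k})=0$ for odd $k$ (because $\eta\omega^{1-k}$ is then an odd character), the product of the last three exponentials collapses to a single $\exp(S)$, where $S$ collects $L_p(k,\omega^{1-k})$-terms over odd $k\geq3$ and $L_p(k,\eta\omega^{1-k})$-terms over even $k\geq2$.

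Since $S$ begins at order $p^2$, I would expand $\exp(S)\equiv1+S+\tfrac12S^2\pmod{p^6}$, the cube being $O(p^6)$. Collecting by powers of $p$ and writing $S=\sum_{k\geq2}S_kp^k$, the surviving contributions are $S_2p^2$, $S_3p^3$, $(S_4+\tfrac12S_2^2)p^4$ and $(S_5+S_2S_3)p^5$, where each $S_k$ is the explicit rational multiple of the corresponding $L$-value produced by the two sums defining $S$. I would then replace every $L$-value through (\ref{fmain327}) to the precision its term requires—$L_p(2,\eta\omega^{-1})$ modulo $p^4$ for the $p^2$-term, but only modulo $p$ for the cross term $S_2S_3p^5$—and convert the $\eta$-values into Euler numbers via $\frac{B_{n+1,\eta}}{n+1}=-\frac{E_n}{2}$, which yields $E_{p^4-p^3-2}$, $E_{p^2-p-4}$, $E_{p-3}$ alongside $B_{p^3-p^2-2}$, $B_{p-5}$ and $B_{p-3}$.

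The step needing the most care is the term $\tfrac12S_2^2p^4$: because it carries $L_p(2,\eta\omega^{-1})^2$ against $p^4$, I need that $L$-value only modulo $p^2$, and the clean way to record it is $B_p(2,2;\eta)=\tfrac12(2E_{p-3}-E_{2p-4})$, which is exactly the combination appearing in the statement rather than a single Euler number. The remaining obstacles are pure bookkeeping: checking that replacing the exact denominators supplied by (\ref{fmain327}) with their leading truncations (for instance $p^2+2$ in the $p^3$-term) alters each contribution only by a multiple of $p^6$ (using that the relevant $B_n$ are $p$-integral for $p>5$), and verifying that $p>5$ secures both the inequality $m+k\leq p-1$ and $p^2\nmid$ the relevant conductors, so that Lemma \ref{lemma5} and (\ref{fmain327}) are applicable at every stage.
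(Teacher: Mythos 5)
Your proposal is correct and follows exactly the route the paper intends: the paper derives Corollary \ref{Cs1v} by "setting $c=-1$, $d=1$, $r=4$ in Theorem \ref{main1hz}" and invoking (\ref{fmain327}), which is precisely your computation, including the identification of the first exponential with $2^{3p-3}$, the parity vanishing of the two families of $L$-values, the quadratic expansion of $\exp(S)$, and the conversion $\frac{B_{n+1,\eta}}{n+1}=-\frac{E_n}{2}$ (your coefficients $-2$, $\frac{15}{4}$, $-10$, $\frac{375}{16}$ and the cross terms all reproduce the stated congruence, with $B_p(2,2;\eta)=\frac12(2E_{p-3}-E_{2p-4})$ handled as you describe).
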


Finally, we consider the $mod\ p^k$ evaluations of the homogeneous
multiple harmonic sums of Lemher's type
$$M_p(d,r,m;n)=\sum_{1\leq k_1<\cdots< k_n<dp/r \atop
(k_i,p)=1}\frac{1}{k_1^m\cdots k_n^m},$$ and
$$\overline{M}_p(d,r,m;n)=\sum_{1\leq k_1\leq\cdots\leq k_n<dp/r
\atop (k_i,p)=1}\frac{1}{k_1^m\cdots k_n^m}.$$ As before we have
\begin{equation}\label{fmain2656} 1+\sum_{n=1}^{[dp/r]}M_p(d,r,m;n)t^n=exp(\sum_{j=1}^{\infty}(-1)^{j-1}H_p(d,r;mj)\frac{t^j}{j}),\end{equation}
and
\begin{equation}\label{fmain2657} 1+\sum_{n=1}^{\infty}\overline{M}_p(d,r,m;n)t^n=exp(\sum_{j=1}^{\infty}H_p(d,r;mj)\frac{t^j}{j}).\end{equation}
Applying Corollary \ref{C2} to (\ref{fmain2656}) and
(\ref{fmain2657}) we can deduce $mod\ p^k$ evaluations of
$M_p(d,r,m;n)$ and $\overline{M}_p(d,r,m;n)$ for any $k$. The
following are two examples:
\begin{corollary} \label{C5ug}
Assume that $p>mn+4,$ and $(d,2)=1$, then when $m>1$, we have
\begin{equation*}\label{fmain2658} M_p(d,2,m;n)\equiv\begin{cases}\sum_{j=1}^{n-1}\frac{(2^{mj}-2)(2^{m(n-j)}-\frac{1}{2})B_{p-mj}B_{p-m(n-j)-1}}{j^2(mn-mj+1)}dp \\
&\\ -\frac{(2^{mn}-2)B_{p^2-p-mn+1}}{n(p+mn-1)}\ \ \ \ (mod\ p^2)&\text{if}\ mn\  is\ odd \\
 &\\ (-1)^{n}\sum_{j=1}^{n-1}\frac{m^2(2^{mj}-\frac{1}{2})(2^{m(n-j)}-\frac{1}{2})B_{p-mj-1}B_{p-m(n-j)-1}}{2(mj+1)(mn-mj+1)}d^2p^2 \\ &\\
-(-1)^{n}\frac{m(2^{mn}-\frac{1}{2})B_{p^2-p-mn}}{p+mn}dp \ \ \ \
\ \ \  \ \ \ (mod\ p^3)& \text{if}\ m\ is\ even
\\ \end{cases}\end{equation*}
and, when $m=1$, and $n>1$ is odd we have
{\setlength{\arraycolsep}{0pt}
\begin{eqnarray}\label{fmainxz2655}
&&M_p(d,2,1;n)\equiv\sum_{j=2}^{n-2}\frac{(2^{j}-2)(2^{n-j}-\frac{1}{2})B_{p-j}B_{p-n+j-1}}{j^2(n-j+1)}dp
\ \    \nonumber
\\
&& +\frac{q_p(2)(2^{n}-1)B_{p-n}}{n}dp-\frac{(2^{n}-2)B_{p^2-p-n+1}}{n(p+n-1)}\ \ \ \ (mod\ p^2).
\nonumber
\end{eqnarray}
}
\end{corollary}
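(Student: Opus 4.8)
The plan is to read $M_p(d,2,m;n)$ off the generating-function identity (\ref{fmain2656}) with $r=2$, i.e. $1+\sum_n M_p(d,2,m;n)t^n=\exp\big(\sum_{j\ge 1}(-1)^{j-1}H_p(d,2;mj)t^j/j\big)$, combined with the mod $p^k$ evaluations of the single-index sums $H_p(d,2;\ell)$ supplied by Corollary \ref{Cj2}. The first step is to record the $p$-adic orders of the $H_p(d,2;\ell)$: since a Bernoulli number of odd index $>1$ vanishes, the $n=0$ term of Corollary \ref{Cj2} survives only for $\ell$ odd, whence $v_p\big(H_p(d,2;\ell)\big)=0$ for $\ell$ odd and $v_p\big(H_p(d,2;\ell)\big)\ge 1$ for $\ell$ even. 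Setting $h_j=(-1)^{j-1}H_p(d,2;mj)/j$, the exponential formula gives $M_p(d,2,m;n)=\sum_{\sum_j ja_j=n}\prod_j h_j^{a_j}/a_j!$, and the order estimate on the $h_j$ pins down exactly which partitions of $n$ can contribute modulo the asserted power of $p$.

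When $m$ is even every $H_p(d,2;mj)$ is divisible by $p$, so the term indexed by $(a_j)$ has order at least $\sum_j a_j$; modulo $p^3$ only $\sum_j a_j\le 2$ remains, giving $M_p(d,2,m;n)\equiv h_n+\tfrac12\sum_{a+b=n}h_ah_b$. Substituting the two-term form of Corollary \ref{Cj2} for $H_p(d,2;mn)$ to order $p^3$ and for $H_p(d,2;ma),H_p(d,2;mb)$ to order $p^2$, and collecting (the factor $\tfrac12$ cancelling the double counting in the ordered sum $\sum_{a+b=n}$), yields the even-$m$ formula. When $mn$ is odd the situation is reversed: $h_n$ has order $0$, and its $j=2$ expansion from Corollary \ref{Cj2} supplies both the leading piece $-(2^{mn}-2)B_{p^2-p-mn+1}/\big(n(p+mn-1)\big)$ and a $dp$-correction, while a quadratic term $h_ah_b$ with $a+b=n$ has order exactly $1$ precisely when one of $a,b$ is odd and the other even. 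Summing those over $j=1,\dots,n-1$ produces the displayed double sum, in which the summands with $j$ even vanish automatically because $B_{p-mj}$ then has odd index. The case $m=1$, $n$ odd is the identical computation through the $m=1$ branch of Corollary \ref{Cj2}, the $q_p(2)$-term coming from $-\tfrac{2}{p}\log_p 2^{p-1}\equiv 2\sum_n(-1)^n q_p(2)^n p^{n-1}/n$.

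The main obstacle is the parity and order bookkeeping rather than any single hard estimate: in each regime one must check that the finitely many surviving partitions of $n$ are exactly those claimed, and that the parities of the Bernoulli indices align so that the vanishing of odd-index Bernoulli numbers simultaneously kills the unwanted partitions and silently deletes the spurious even-$j$ summands from the final sums. Once each surviving $h_j$ is replaced by its two-term expansion from Corollary \ref{Cj2}, what remains is the routine algebra of multiplying out expressions that are at most quadratic in $dp$ and matching coefficients.
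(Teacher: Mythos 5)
Your overall strategy coincides with the paper's (the paper offers no details beyond ``apply Corollary \ref{C2} to (\ref{fmain2656})''), and your treatment of the even-$m$ branch is sound: there every $h_j=(-1)^{j-1}H_p(d,2;mj)/j$ has $p$-adic order at least $1$, because the constant term $(2-2^{mj})B_{p-mj}/(mj)$ supplied by Corollary \ref{Cj2} involves a Bernoulli number of odd index and hence vanishes; so modulo $p^3$ only the partitions $\{n\}$ and $\{a,b\}$ survive, and your algebra reproduces the displayed coefficients.

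The gap is in your odd-case bookkeeping, and it is fatal. When $\ell$ is odd, $H_p(d,2;\ell)$ is generically a $p$-adic \emph{unit}: its leading term $(2-2^{\ell})B_{p-\ell}/\ell$ has an even-index Bernoulli number (e.g.\ $\sum_{k\le (p-1)/2}1/k\equiv -2q_p(2)$ and $\sum_{k\le (p-1)/2}1/k^{3}\equiv -2B_{p-3}$ mod $p$). Hence for $mn$ odd a partition of $n$ into three or more odd parts contributes to $M_p(d,2,m;n)$ at order $p^{0}$, and a partition into several odd parts plus one even part contributes at order $p^{1}$; your accounting (and the displayed formula) keeps only $\{n\}$ and $\{j,n-j\}$. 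Already for $n=3$ the omitted term is $\tfrac16 H_p(d,2;m)^{3}$, a unit. This cannot be repaired by tightening the write-up, because the statement itself fails: for $p=11$, $d=1$, $m=1$, $n=3$ one computes $M_{11}(1,2,1;3)=\sum_{1\le k_1<k_2<k_3\le 5}(k_1k_2k_3)^{-1}=17/24\equiv 36\pmod{121}$, so $\equiv 3\pmod{11}$, whereas the right-hand side of the second display is $\equiv 1\pmod{11}$; the discrepancy is exactly $\tfrac16\bigl(\sum_{k\le 5}1/k\bigr)^{3}\equiv 2\pmod{11}$. So the step ``the finitely many surviving partitions of $n$ are exactly those claimed'' is false in the odd regimes, and any correct version of the corollary must include the contributions of all partitions of $n$ into odd parts.
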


\section{ congruences of Gauss and Jacobi}
In this section, we will give $p$-adic expansions of ${(p-1)/2\choose (p-1)/4}$ (when $4|p-1$) and ${2(p-1)/3\choose (p-1)/3}$ (when $3|p-1$), and hence full generalizations of congruences of Gauss and Jacobi. First we introduce Morita's $p$-adic gamma function.
For a positive integer $k$, set \begin{equation}\label{fmain2659}  \Gamma_p(k)=(-1)^k\prod_{j=1 \atop
(j,p)=1}^{k-1}j.\end{equation} Then $\Gamma_p$ extends uniquely to a continuous function from $\mathbb{Z}_p$ to $\mathbb{Z}_p^*$\cite{ko}.
Let $a$, $b,$ and $m$ be positive integers satisfying $p\equiv1 \ (mod\ m)$, and  $a+b\leq m$. Then we have
{\setlength{\arraycolsep}{0pt}
\begin{eqnarray}\label{fmainza2655}
&&{(a+b)(p-1)/m\choose a(p-1)/m}=
\frac{{(a+b)(p^k-1)/m\choose a(p^k-1)/m}{a(p^k-1)/m\choose a(p-1)/m}{b(p^k-1)/m\choose b(p-1)/m}}
{{(a+b)(p^k-1)/m\choose (a+b)(p-1)/m}{(a+b)(p^k-p)/m\choose a(p^k-p)/m}}
\\
&&=\frac{-\Gamma_p(1-\frac{a+b}{m})}{\Gamma_p(1-\frac{a}{m})\Gamma_p(1-\frac{b}{m})}
\lim_{k\rightarrow\infty} \frac{{a(p^k-1)/m\choose
a(p-1)/m}{b(p^k-1)/m\choose b(p-1)/m}{(a+b)(p^{k-1}-1)/m\choose
a(p^{k-1}-1)/m}} {{(a+b)(p^k-1)/m\choose
(a+b)(p-1)/m}{(a+b)(p^k-p)/m\choose a(p^k-p)/m}} \nonumber
\end{eqnarray}
} By (\ref{fmain265511}) and (\ref{fmain2655}), the above limit
has a $p$-adic expansion. Now assume that $4|p-1$ and let $a$ be
as in congruence (\ref{fmain1fd}).
 \begin{theorem} \label{mainx41}
{\setlength{\arraycolsep}{0pt}
\begin{eqnarray}\label{fmaince2655}
&&{(p-1)/2\choose (p-1)/4}=
\Big(2a-2a\sum_{j=1}^{\infty}\frac{1}{j}{2j-2\choose
j-1}(\frac{p}{4a^2})^j\Big)  \nonumber
\\
&&exp\Big(\frac{1}{2}
log_p2^{p-1}+\sum_{k=2}^{\infty}L_{p}(k,\eta\omega^{1-k})p^{k}/k)\Big),
\nonumber
\end{eqnarray}
} where  $\eta$ is the unique quadratic character module $4$.
\end{theorem}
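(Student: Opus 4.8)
The plan is to specialize the master identity (\ref{fmainza2655}) by taking both of its summand parameters equal to $1$ and $m=4$ (legitimate since $p\equiv 1\ (mod\ 4)$ and $1+1\le 4$). This writes
$$\binom{(p-1)/2}{(p-1)/4}=\frac{-\Gamma_p(1/2)}{\Gamma_p(3/4)^2}\cdot L,$$
where $L$ is the indicated limit of a ratio of binomial coefficients. The two factors will be handled separately: the gamma quotient will produce the algebraic factor $2a-2a\sum_j\frac1j\binom{2j-2}{j-1}(p/4a^2)^j$, while $L$ will produce the exponential factor.

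Next I would match every binomial coefficient occurring in $L$ to one of the shapes already expanded in Sections 3 and 4. Writing $c=(p^{k-1}-1)/4$ and $c'=(p^{k-1}-1)/2$, the factor $\binom{(p^k-1)/4}{(p-1)/4}$ equals $\binom{cp+[p/4]}{[p/4]}$, an instance of Theorem \ref{main1hz} with $r=4$, $d=1$; likewise $\binom{(p^k-1)/2}{(p-1)/2}$ is the case $r=2$. The factor $\binom{(p^k-p)/2}{(p^k-p)/4}$ equals $\binom{c'p}{cp}$, an instance of Theorem \ref{CC5}, whose companion binomial $\binom{c'}{c}=\binom{(p^{k-1}-1)/2}{(p^{k-1}-1)/4}$ is precisely the level-$(k-1)$ Gauss binomial appearing in the numerator of $L$; these cancel, which is the telescoping that makes the limit tractable. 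Applying Theorems \ref{CC5} and \ref{main1hz} turns $L$ into $\exp$ of a sum of $p$-adic $L$-series; letting $k\to\infty$ (so $c\to-1/4$, $c'\to-1/2$, convergence being guaranteed by Lemmas \ref{lemma49} and \ref{lemma5}) and collecting exponents, the trivial-character terms $L_p(k,\omega^{1-k})$ cancel, the logarithmic prefactors of Theorem \ref{main1hz} combine into $\tfrac12\log_p 2^{p-1}$ (their coefficient of $\log_p 2^{p-1}$ being $-6c+2c'\to\tfrac12$), and the only surviving nontrivial character, $\eta$ (the unique primitive character whose conductor divides $4$), contributes $\sum_{k\ge 2}L_p(k,\eta\omega^{1-k})p^k/k$, the would-be $k=1$ and odd terms vanishing because $\eta\omega^{1-k}$ is then odd. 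This yields the claimed exponential factor.

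Finally I would evaluate $-\Gamma_p(1/2)/\Gamma_p(3/4)^2$. Using the reflection formula to replace $\Gamma_p(3/4)$ by $\Gamma_p(1/4)^{-1}$ up to sign, and then the Gross--Koblitz formula, this quotient is the quartic Jacobi sum $J(\chi,\chi)$ attached to a quartic character $\chi$ modulo $p$; it lies in $\mathbb{Z}[i]$, has norm $p$, and its reduction modulo $p$ is fixed by Gauss's congruence (\ref{fmain1fd}), so it equals the primary prime $a+bi=a+\sqrt{a^2-p}$ with $a\equiv 1\ (mod\ 4)$. Expanding the $p$-adic square root through $\sqrt{1-4x}=1-2\sum_{j\ge1}\frac1j\binom{2j-2}{j-1}x^j$ at $x=p/(4a^2)$ gives $a+\sqrt{a^2-p}=2a-2a\sum_{j\ge1}\frac1j\binom{2j-2}{j-1}(p/4a^2)^j$, which is exactly the first factor.

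The main obstacle is this last evaluation: identifying the gamma quotient with the quartic Jacobi sum and pinning down the correct prime $a+bi$ (rather than a unit multiple or $a-bi$) requires the Gross--Koblitz formula together with a careful normalization of $a$ via the quartic residue symbol, since the norm relation $J\bar J=p$ alone leaves a unit ambiguity. By contrast, the computation of $L$ is essentially bookkeeping: verifying the cancellation of the recursive Gauss binomial and of all trivial-character $L$-terms, though lengthy, is routine once the shapes have been matched to Theorems \ref{CC5} and \ref{main1hz}.
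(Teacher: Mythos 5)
Your proposal reproduces the paper's argument in all structural respects: both specialize the master identity (\ref{fmainza2655}) at $a=b=1$, $m=4$, split the limit exactly as you describe (a Theorem \ref{CC5} piece whose companion binomial telescopes against the level-$(k-1)$ Gauss binomial, plus a Theorem \ref{main1hz} piece for $\binom{(p^k-1)/4}{(p-1)/4}^2\big/\binom{(p^k-1)/2}{(p-1)/2}$), and let $k\to\infty$ so that the trivial-character $L$-terms cancel while the $\eta$-terms and the coefficient $\tfrac12$ of $\log_p 2^{p-1}$ survive; your bookkeeping (including $-6c+2c'\to\tfrac12$ and the vanishing of the odd-$k$ terms for $\eta\omega^{1-k}$) matches the paper's. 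The one place you diverge is the algebraic factor: the paper simply quotes Cosgrave and Dilcher's expansion $\Gamma_p(1/2)/\Gamma_p(3/4)^2=-2a+2a\sum_{j\ge1}\frac1j\binom{2j-2}{j-1}(p/4a^2)^j$ (Corollary 1 of \cite{cd}), whereas you propose to re-derive it via the reflection formula, Gross--Koblitz, and the normalization of the quartic Jacobi sum. That is essentially how the cited result is proved in the literature, and the unit/conjugate ambiguity you flag is genuine and must be resolved by the quartic residue symbol; your route is therefore self-contained but strictly longer, while the paper's citation is the shorter path. Modulo carrying out that normalization carefully, nothing in your argument would fail.
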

\begin{proof}
We need the following expansion from \cite[Corollary 1]{cd}
\begin{equation}\label{fmain2659} \frac{\Gamma_p(1-\frac{1}{2})}{\Gamma_p(1-\frac{1}{4})^2}=-2a+2a\sum_{j=1}^{\infty}\frac{1}{j}{2j-2\choose j-1}(\frac{p}{4a^2})^j.\end{equation}

By Theorem \ref{CC5}, we have {\setlength{\arraycolsep}{0pt}
\begin{eqnarray}\label{fmaince2655}
&&\lim_{k\rightarrow\infty} \frac{{(p^k-p)/2\choose (p^k-p)/4}}
{{(p^{k-1}-1)/2\choose
(p^{k-1}-1)/4}}=exp\Big(\sum_{k=3}^{\infty}(2^{-k}-2^{-2k+1})L_{p}(k,\omega^{1-k})p^{k}/k)\Big),
\end{eqnarray}
} and by Theorem \ref{main1hz},
{\setlength{\arraycolsep}{0pt}
\begin{eqnarray}\label{fmaince2655}
&&\lim_{k\rightarrow\infty} \frac{{(p^k-1)/4\choose (p-1)/4}^2}
{{(p^k-1)/2\choose (p-1)/2}}=exp\Big(\frac{1}{2}
log_p2^{p-1}+\sum_{k=3}^{\infty}(2^{-k}-2^{-2k+1})L_{p}(k,\omega^{1-k})p^{k}/k\Big)\cdot
\nonumber   \\
 &&exp\Big(\sum_{k=2}^{\infty}L_{p}(k,\eta\omega^{1-k})p^{k}/k)\Big)
\nonumber
\end{eqnarray}
}Hence
{\setlength{\arraycolsep}{0pt}
\begin{eqnarray}\label{fmaince2655}
&&\lim_{k\rightarrow\infty} \frac{{(p^k-1)/4\choose
(p-1)/4}^2{(p^{k-1}-1)/2\choose (p^{k-1}-p)/4}} {{(p^k-1)/2\choose
(p-1)/2}{(p^k-p)/2\choose (p^k-p)/4}}=exp\Big(\frac{1}{2}
log_p2^{p-1}+\sum_{k=2}^{\infty}L_{p}(k,\eta\omega^{1-k})p^{k}/k)\Big),
\nonumber
\end{eqnarray}
}and the theorem follows.
\end{proof}

Now assume that $3|p-1$ and let $a$ be as in congruence
(\ref{fmain1fd1}). Using exactly the same proof, we get
 \begin{theorem} \label{mainx4}
{\setlength{\arraycolsep}{0pt}
\begin{eqnarray}\label{fmaince2655}
&&{2(p-1)/3\choose (p-1)/3}=
\Big(-r+r\sum_{j=1}^{\infty}\frac{1}{j}{2j-2\choose
j-1}(\frac{p}{r^2})^j\Big)\cdot  \nonumber
\\
&&exp\Big(\sum_{k=3}^{\infty}(1-2^{k-1})(1-3^{-k})L_{p}(k,\omega^{1-k})p^{k}/k\Big)\cdot
\nonumber   \\
 &&exp\Big(\sum_{k=2}^{\infty}(1+2^{k-1})L_{p}(k,\phi\omega^{1-k})p^{k}/k)\Big),
\nonumber
\end{eqnarray}
} where  $\phi$ is the unique quadratic character module $3$.
\end{theorem}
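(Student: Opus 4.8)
The plan is to run \emph{verbatim} the argument already used for Theorem \ref{mainx41}, taking now $m=3$ and $a=b=1$ in the factorization identity (\ref{fmainza2655}), so that $(a+b)/m=2/3$ and $a/m=b/m=1/3$, with the quadratic character $\phi$ modulo $3$ playing the role that $\eta$ modulo $4$ played in the Gauss case. First I would specialize (\ref{fmainza2655}) to these parameters; it writes ${2(p-1)/3 \choose (p-1)/3}$ as the gamma factor $-\Gamma_p(1-\tfrac{2}{3})/\Gamma_p(1-\tfrac{1}{3})^2$ times a limit as $k\to\infty$ of a ratio of five binomial coefficients. For the gamma factor I would invoke the $p$-adic expansion of $-\Gamma_p(1-\tfrac{2}{3})/\Gamma_p(1-\tfrac{1}{3})^2$ from \cite[Corollary 1]{cd} (the analogue of the expansion quoted for $-\Gamma_p(1-\tfrac{1}{2})/\Gamma_p(1-\tfrac{1}{4})^2$ in the proof of Theorem \ref{mainx41}), which gives $-r+r\sum_{j\geq1}\tfrac{1}{j}{2j-2 \choose j-1}(p/r^2)^j$; its leading term $-r$ recovers the classical Jacobi congruence (\ref{fmain1fd1}).

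The heart of the matter is the limit. I would factor it as a product of two limits. The first is ${(p^k-1)/3 \choose (p-1)/3}^2 / {2(p^k-1)/3 \choose 2(p-1)/3}$, to which Theorem \ref{main1hz} applies with $r=3$ and the two data $(c,d)=((p^{k-1}-1)/3,\,1)$ and $(c,d)=(2(p^{k-1}-1)/3,\,2)$ (note $[d/r]=0$ in both cases, so the lower binomials are $1$). The second is ${2(p^{k-1}-1)/3 \choose (p^{k-1}-1)/3} / {2(p^k-p)/3 \choose (p^k-p)/3}$, the reciprocal of the ratio ${2(p^k-p)/3 \choose (p^k-p)/3}/{2(p^{k-1}-1)/3 \choose (p^{k-1}-1)/3}$ evaluated by Theorem \ref{CC5} with $c=2(p^{k-1}-1)/3$, $d=(p^{k-1}-1)/3$. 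Since $p\equiv1\pmod 3$, all of these $c$ are integers for every $k$, so both theorems apply directly; letting $k\to\infty$ sends $p^{k-1}\to0$ $p$-adically, whence $c\to-1/3$ or $-2/3$, $c-d\to-1/3$, and $cr+d\to0$, and the two limits are read off from the (continuous) right-hand sides. The analytic input is exactly as in Theorem \ref{mainx41}: analyticity of $L_{p}(s,\chi)$ and continuity of the $p$-adic exponential.

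It remains to collect contributions character by character. For the trivial character $\omega^{1-k}$ I expect the $log_p3^{p-1}$ terms produced by Theorem \ref{main1hz} to cancel between the numerator and the denominator of the first limit (in contrast to the $r=4$ case, where a residual $\tfrac{1}{2} log_p2^{p-1}$ survived and appears in Theorem \ref{mainx41}). Using $L_{p}(k,\omega^{1-k})=0$ for even $k$, so that $(-1)^k=-1$ throughout, the several $3^{-k}$- and $2^k$-weighted pieces coming from Theorems \ref{CC5} and \ref{main1hz} should coalesce: the main1hz piece contributes $(1-2^{k-1})+3^{-k}(2^k+2^{k-1}-3)$ and the CC5 piece contributes $3^{-k}(2-2^k)$, and their sum collapses to $(1-2^{k-1})(1-3^{-k})$. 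For the character $\phi\omega^{1-k}$, which arises only from the $\psi\neq\textbf{1}$ factor of Theorem \ref{main1hz}, I would use that $\phi(-1)=-1$ forces $L_{p}(k,\phi\omega^{1-k})=0$ for odd $k$ (so the sum runs over even $k\geq2$), together with $\phi(1)=1$, $\phi(2)=-1$ and the two values of $d$, to obtain the coefficient $1+2^{k-1}$.

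The main obstacle is purely bookkeeping: keeping straight the signs, the powers of $2$ and $3$, the character values $\phi(d)$ and the factors $d(\psi,3,-k)$ (equal to $1-3^{-k}$ for $\psi=\textbf{1}$ and to $1$ for $\psi=\phi$, since $f_\phi=3$) across the four separate applications of Theorems \ref{CC5} and \ref{main1hz}, and then checking that the even/odd vanishing of the two families of $p$-adic $L$-values makes the partial contributions telescope into the clean coefficients $(1-2^{k-1})(1-3^{-k})$ and $1+2^{k-1}$. No new analytic idea is needed beyond Theorem \ref{mainx41}; the only delicate point is confirming that the trivial-character $log_p3^{p-1}$ terms cancel exactly, which is precisely what removes the analogue of the $\tfrac{1}{2} log_p2^{p-1}$ term from the final formula.
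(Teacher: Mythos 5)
Your proposal is correct and is essentially the paper's own proof: the paper proves this theorem by the remark ``using exactly the same proof'' as Theorem \ref{mainx41}, and you carry out precisely that argument --- the factorization (\ref{fmainza2655}) with $m=3$, $a=b=1$, the Cosgrave--Dilcher expansion of $-\Gamma_p(1-\tfrac{2}{3})/\Gamma_p(1-\tfrac{1}{3})^2$, and the two limits via Theorems \ref{main1hz} and \ref{CC5}. Your bookkeeping checks out: the $log_p3^{p-1}$ terms do cancel, the trivial-character contributions sum to $(1-2^{k-1})(1-3^{-k})$, and $\phi(p)=1$, $\phi(2)=-1$ give the coefficient $1+2^{k-1}$ on the even-$k$ sum.
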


\begin{remark}Using the $p$-adic expansions of
$\frac{-\Gamma_p(1-\frac{a+b}{m})}{\Gamma_p(1-\frac{a}{m})\Gamma_p(1-\frac{b}{m})}$
(m=4,6,8) obtained in \cite{ad}, we can give similar expansions
for ${(a+b)(p-1)/m\choose a(p-1)/m}$ (m=4,6,8).
\end{remark}

 \section{Wilson's theorem and related congruences}
In this section, we show how to  get the $mod\ p^k$ evaluations  of $(p-1)!$, and  $(\frac{p-1}{2})!$ (when $4|p-3$) and  $(\frac{p-1}{4})!^4$ (when $4|p-1$),
 and hence full generalizations of Wilson's theorem and related congruences.
 By (\ref{fmain2659xsws}), we have for $m<p$,  {\setlength{\arraycolsep}{0pt}
\begin{eqnarray}\label{fmain265cs5}
&&log_p(-(p-1)!)=\frac{1}{p-1}log_p(p-1)!^{p-1}
\\
&&=\frac{1}{p-1}\sum_{k=1}^{p-1}log_p(1+(k^{p-1}-1))   \nonumber\\
&&=\frac{1}{p-1}\sum_{k=1}^{p-1}\sum_{n=1}^{\infty}(-1)^{n-1}(k^{p-1}-1)^n/n
\nonumber   \\
&&\equiv\frac{1}{p-1}\sum_{n=1}^{m-1}\frac{-1}{n}\sum_{j=0}^{n}(-1)^j{n\choose
j} \sum_{k=1}^{p-1}k^{j(p-1)}
\nonumber   \\
&&\equiv\frac{1}{p-1}\sum_{n=1}^{m-1}\frac{1}{n}
\Big(\sum_{j=1}^{n}\frac{(-1)^{j-1}{n\choose j}}{j(p-1)+1}
\sum_{k=0}^{j(p-1)}{j(p-1)+1\choose k+1}B_{j(p-1)-k}p^{k+1}\Big)
\nonumber \\ &&+\sum_{n=1}^{m-1}\frac{-1}{n} \    (mod\ p^m)
\nonumber
\end{eqnarray}
}

From the above expansion, we can deduce $mod\ p^m$ evaluations  of
$(p-1)!$ for any $m>0$. But when $m>4$, the congruence would be
 too complicated to be written down. Thus we will only work out the case $m=4$. Set $V_{p,i}=\frac{pB_{i(p-1)}}{p-1}-1$.
\begin{lemma}\label{lemma4h9}
 \begin{equation}\label{fmain2659xsn}  V_{p,i}\equiv 0\ (mod\ p),\end{equation}
\begin{equation}\label{fmain2659xs}  2V_{p,1}-V_{p,2}\equiv 3V_{p,2}-2V_{p,3}\equiv 0\ (mod\ p^2),\end{equation}  and
  \begin{equation}\label{fmain2659xs1}  3V_{p,1}-3V_{p,2}+V_{p,3}\equiv 0\ (mod\ p^3).\end{equation}
   \end{lemma}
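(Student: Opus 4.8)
The plan is to reduce all three assertions to the behaviour, along the arithmetic progression of exponents $1-i(p-1)$, of the single $p$-adic analytic function $F(s):=p(1-s)L_{p}(s,\textbf{1})$, which is integral by Lemma \ref{lemma49}. The first assertion is quickest seen directly from the von Staudt--Clausen theorem: since $(p-1)\mid i(p-1)$ we have $pB_{i(p-1)}\equiv-1\ (mod\ p)$, so $\frac{pB_{i(p-1)}}{p-1}\equiv\frac{-1}{-1}=1\ (mod\ p)$ and $V_{p,i}\equiv0\ (mod\ p)$. (Equivalently one can read this off from the case $k=1$ of the congruence established below.)

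For the deeper congruences I would first record how $F$ detects the numbers $pB_{i(p-1)}$. Because $\omega^{-i(p-1)}=\textbf{1}$, formula (\ref{fmain321}) gives $L_{p}(1-i(p-1),\textbf{1})=-(1-p^{i(p-1)-1})\frac{B_{i(p-1)}}{i(p-1)}$, hence $F(1-i(p-1))=-(1-p^{i(p-1)-1})\,pB_{i(p-1)}$. As $i(p-1)-1\geq3$ for $p\geq5$ and $i\geq1$, this yields $pB_{i(p-1)}\equiv-F(1-i(p-1))\ (mod\ p^{3})$. The one extra value I need is the boundary term $F(1)$: since $L_{p}(s,\textbf{1})$ has a simple pole at $s=1$ with residue $1-1/p$, the factor $(1-s)$ removes it and $F(1)=1-p$.

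The heart of the argument is a Kummer-type congruence for $F$ refining Lemma \ref{lemma5} to the trivial character. Mirroring that proof, I would take $D=p$ in (\ref{fmain32}) and, using (\ref{fmain3211}), expand $F(s)=-\sum_{a=1}^{p-1}\sum_{m,n\geq0}{1-s\choose m}{1-s\choose n}(\langle a \rangle-1)^{m}B_{n}(p/a)^{n}$. Each summand is a polynomial in $s$ of degree $m+n$ whose coefficient $(\langle a \rangle-1)^{m}B_{n}(p/a)^{n}$ has $p$-adic valuation at least $m+n$, the sole exceptions being $n$ with $(p-1)\mid n$, $n\geq1$, where the von Staudt--Clausen pole of $B_{n}$ costs one power and the valuation is only $\geq m+n-1$. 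The key observation is that such exceptional terms require $n\geq p-1$; hence, as long as $k\leq p-2$, no exceptional term occurs among the summands with $m+n\leq k$. Splitting $\Delta_{t}^{k}$ over the ranges $m+n<k$ (polynomial of degree $<k$, so the difference vanishes), $m+n=k$ (the difference is an integer multiple of the coefficient, whose valuation is $\geq k$), and $m+n>k$ (each summand is already divisible by $p^{k}$) then gives $\Delta_{t}^{k}F(s)\equiv0\ (mod\ p^{k})$ for all $s\in\mathbb{Z}_p$ and all $k\leq p-2$.

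With this in hand the three congruences are routine. Taking increment $t=-(p-1)$ and writing $\phi_{i}=F(1-i(p-1))$, so that $\phi_{0}=1-p$ and $pB_{i(p-1)}\equiv-\phi_{i}\ (mod\ p^{3})$, the case $k=2$ at the base points $s=1$ and $s=2-p$ gives $\phi_{2}-2\phi_{1}+\phi_{0}\equiv0$ and $\phi_{3}-2\phi_{2}+\phi_{1}\equiv0\ (mod\ p^{2})$, whence $2\phi_{1}-\phi_{2}\equiv1-p$ and $3\phi_{2}-2\phi_{3}\equiv1-p\ (mod\ p^{2})$; dividing by the unit $p-1$ turns these into $2V_{p,1}-V_{p,2}\equiv0$ and $3V_{p,2}-2V_{p,3}\equiv0\ (mod\ p^{2})$. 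The case $k=3$ at $s=1$ gives $\phi_{3}-3\phi_{2}+3\phi_{1}-\phi_{0}\equiv0\ (mod\ p^{3})$, i.e. $3\phi_{1}-3\phi_{2}+\phi_{3}\equiv1-p\ (mod\ p^{3})$, which after dividing by $p-1$ is exactly $3V_{p,1}-3V_{p,2}+V_{p,3}\equiv0\ (mod\ p^{3})$. I expect the main obstacle to be the sharpened bound $\Delta_{t}^{k}F\equiv0\ (mod\ p^{k})$ rather than the naive $(mod\ p^{k-1})$: everything hinges on the fact that the valuation-lowering terms coming from the poles of the Bernoulli numbers enter only once $n\geq p-1$, so they are invisible in the range $k\leq p-2$ that we use. (The finitely many small cases, in particular $p=3$, can be checked directly.)
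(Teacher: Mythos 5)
Your proof is correct, but it takes a genuinely different route from the paper's. The paper argues elementarily: the first congruence is read off from von Staudt--Clausen, and the other two follow by expanding $\sum_{k=1}^{p-1}(k^{p-1}-1)^2$ and $\sum_{k=1}^{p-1}(k^{p-1}-1)^3$, which are trivially divisible by $p^2$ and $p^3$ by Fermat's little theorem, and converting the power sums $\sum_{k=1}^{p-1}k^{i(p-1)}$ into $pB_{i(p-1)}$ via the Bernoulli formula (the same manipulation as in (\ref{fmain265cs5})). You instead pass through the regularized function $F(s)=p(1-s)L_{p}(s,\textbf{1})$, prove a sharpened Kummer congruence $\Delta_{t}^{k}F\equiv0\ (mod\ p^{k})$ for $k\leq p-2$ --- which is genuinely new relative to the paper, since Lemma \ref{lemma5} excludes the trivial character --- and then read off the three congruences as second and third finite differences of $F$ along $s=1-i(p-1)$, with the residue of the pole at $s=1$ supplying exactly the constant $1-p$ that accounts for the ``$-1$'' in $V_{p,i}$. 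Your route costs an extra lemma but is more systematic: it makes transparent why the correct linear combinations are binomial differences, and it would produce the analogous mod $p^{k}$ statements for any $k\leq p-2$ with no additional work, whereas the paper's computation must be redone at each level. Conceptually the two proofs are cousins --- the divisibility $p^{j}\mid\sum_{k}(k^{p-1}-1)^{j}$ is the elementary shadow of your Kummer congruence for $\textbf{1}$ --- but the machinery is different. Two small caveats: your argument genuinely needs $p\geq5$ (both for $k=3\leq p-2$ and for discarding the Euler factor $p^{i(p-1)-1}$ modulo $p^{3}$), and the ``direct check'' you defer at $p=3$ would in fact reveal that (\ref{fmain2659xs1}) \emph{fails} there ($3V_{3,1}-3V_{3,2}+V_{3,3}=-9/140$ has $3$-adic valuation $2$); this is harmless since the lemma is only invoked for $p>3$, but the parenthetical should be corrected rather than left as an unverified claim.
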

\begin{proof}
The first congruence follows  from the von Staudt-Clausen theorem. The second and third follow by evaluating $\sum_{k=1}^{p-1}(k^{p-1}-1)^2$  modulo $p^2$, and $\sum_{k=1}^{p-1}(k^{p-1}-1)^3$ modulo $p^3$.
\end{proof}
    \begin{theorem} \label{mainx4} For $p>3$,
{\setlength{\arraycolsep}{0pt}
\begin{eqnarray}\label{fmaince2655}
(p-1)!&\equiv&-1-3V_{p,1}+\frac{3}{2}V_{p,2}-\frac{1}{3}V_{p,3}-\frac{1}{2}(2V_{p,1}-\frac{1}{2}V_{p,2})^2- \frac{1}{6}V_{p,1}^3 \nonumber
\\
&&+(B_{p-3}-\frac{3}{2}B_{2p-4}+\frac{2}{3}B_{3p-5})p^3 \ (mod\ p^4),
\nonumber
\end{eqnarray}
}
 {\setlength{\arraycolsep}{0pt}
\begin{eqnarray}\label{fmaince2655}
(-1)^{\frac{p-1}{2}}4^{p-1}(\frac{p-1}{2})!^2&\equiv&-1-3V_{p,1}+\frac{3}{2}V_{p,2}-\frac{1}{3}V_{p,3}
-\frac{1}{2}(2V_{p,1}-\frac{1}{2}V_{p,2})^2-\frac{1}{6}V_{p,1}^3 \nonumber
\\
&&+(\frac{13}{12}B_{p-3}-\frac{3}{2}B_{2p-4}+\frac{2}{3}B_{3p-5})p^3 \ (mod\ p^4),
\nonumber
\end{eqnarray}
}
\end{theorem}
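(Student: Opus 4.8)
The plan is to evaluate $\log_p(-(p-1)!)$ modulo $p^4$ from the expansion (\ref{fmain265cs5}) at $m=4$, then recover $(p-1)!$ by exponentiating, with Lemma \ref{lemma4h9} used to put the quadratic and cubic corrections into closed form. Setting $m=4$ and abbreviating $S_j=\sum_{k=1}^{p-1}k^{j(p-1)}$, collecting the coefficient of each $S_j$ over $n=1,2,3$ collapses the double sum to $\tfrac{1}{p-1}(3S_1-\tfrac32 S_2+\tfrac13 S_3)$, while the separated $j=0$ terms give $\sum_{n=1}^{3}\tfrac{-1}{n}=-\tfrac{11}{6}$. In the power-sum expansion of $S_j$ appearing in (\ref{fmain265cs5}), the $k=1$ term carries $B_{j(p-1)-1}=0$ (an odd index exceeding $1$) and each term with $k\ge 3$ is divisible by $p^4$, so modulo $p^4$ only $k=0$ and $k=2$ survive, giving $S_j\equiv pB_{j(p-1)}+\tfrac{j(j+1)}{6}B_{j(p-1)-2}p^3$. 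Writing $pB_{j(p-1)}=(p-1)(1+V_{p,j})$ and using $\tfrac{1}{p-1}\equiv-1\pmod p$ on the $p^3$ part, the constant $-\tfrac{11}{6}$ cancels the $\tfrac{11}{6}$ produced by the $1+V_{p,j}$, leaving
\[
\log_p(-(p-1)!)\equiv 3V_{p,1}-\tfrac32 V_{p,2}+\tfrac13 V_{p,3}-\bigl(B_{p-3}-\tfrac32 B_{2p-4}+\tfrac23 B_{3p-5}\bigr)p^3\pmod{p^4};
\]
the vanishing of the constant is forced anyway, since $-(p-1)!\equiv 1\pmod p$ by Wilson.

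Next I would exponentiate. Writing $L$ for the right-hand side above, $-(p-1)!=\exp(L)$ with $L=O(p)$, so $(p-1)!=-\exp(L)\equiv -1-L-\tfrac12 L^2-\tfrac16 L^3\pmod{p^4}$ (the $L^4$ term and beyond are $O(p^4)$). The linear term reproduces the $V_{p,j}$'s and the Bernoulli $p^3$ term; the nonlinear terms are where Lemma \ref{lemma4h9} is decisive. Congruence (\ref{fmain2659xs1}) states exactly that $3V_{p,1}-\tfrac32 V_{p,2}+\tfrac13 V_{p,3}\equiv 2V_{p,1}-\tfrac12 V_{p,2}\pmod{p^3}$, and since both sides are $O(p)$ their squares agree modulo $p^4$, giving $\tfrac12 L^2\equiv\tfrac12(2V_{p,1}-\tfrac12 V_{p,2})^2$. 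Congruences (\ref{fmain2659xs}) give $V_{p,2}\equiv 2V_{p,1}$ and $V_{p,3}\equiv 3V_{p,1}\pmod{p^2}$, whence $L\equiv V_{p,1}\pmod{p^2}$ and therefore $\tfrac16 L^3\equiv\tfrac16 V_{p,1}^3\pmod{p^4}$. Substituting these yields the first congruence.

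For the second congruence I would reduce to the first via $\binom{p-1}{(p-1)/2}=(p-1)!/((\tfrac{p-1}{2})!)^2$: setting $Q=(-1)^{(p-1)/2}4^{-p+1}\binom{p-1}{(p-1)/2}$ gives $(-1)^{(p-1)/2}4^{p-1}((\tfrac{p-1}{2})!)^2=(p-1)!/Q$. Morley's congruence (\ref{fmain3}) yields $Q\equiv 1\pmod{p^3}$, so $\log_p Q\equiv Q-1=O(p^3)$; replacing $L$ by $L-\log_p Q$ therefore alters only the linear term of the exponential and gives $(p-1)!/Q\equiv (p-1)!+\log_p Q\pmod{p^4}$. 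From Corollary \ref{Cs1}, $Q\equiv 1+\tfrac14\frac{B_{p^5-p^4-2}}{p^4+2}p^3\pmod{p^4}$, and a Kummer-congruence reduction identifies $\frac{B_{p^5-p^4-2}}{p^4+2}\equiv\tfrac13 B_{p-3}\pmod p$, so $\log_p Q\equiv\tfrac1{12}B_{p-3}p^3$; adding this to the $p^3$-coefficient $B_{p-3}$ of the first congruence turns it into $\tfrac{13}{12}B_{p-3}$, exactly as claimed.

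The main obstacle is the exponentiation step: beyond the routine truncation of the Faulhaber expansions (verifying the $k=1$ vanishing and the $k\ge 3$ smallness), one must recognize that the three relations of Lemma \ref{lemma4h9} are calibrated precisely to collapse $\tfrac12 L^2$ and $\tfrac16 L^3$ into the closed forms $\tfrac12(2V_{p,1}-\tfrac12 V_{p,2})^2$ and $\tfrac16 V_{p,1}^3$; a weaker form of (\ref{fmain2659xs1}) would leave an uncontrolled $O(p^3)$ discrepancy inside the square. The remaining work — the coefficient bookkeeping and the Kummer reductions of the Bernoulli indices — is direct.
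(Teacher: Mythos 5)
Your proposal is correct and follows essentially the same route as the paper: evaluate $log_p(-(p-1)!)$ via (\ref{fmain265cs5}) with $m=4$, exponentiate, use the congruences (\ref{fmain2659xs}) and (\ref{fmain2659xs1}) of Lemma \ref{lemma4h9} to collapse $\tfrac12 L^2$ and $\tfrac16 L^3$ into $\tfrac12(2V_{p,1}-\tfrac12 V_{p,2})^2$ and $\tfrac16 V_{p,1}^3$, and derive the second congruence from the first together with Corollary \ref{Cs1}. You supply somewhat more detail than the paper (the explicit Faulhaber truncation and the Kummer reduction $\frac{B_{p^5-p^4-2}}{p^4+2}\equiv\tfrac13 B_{p-3}\ (mod\ p)$), but the argument is the same.
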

\begin{proof}
by (\ref{fmain265cs5}) we have {\setlength{\arraycolsep}{0pt}
\begin{eqnarray}\label{fmaince2655}
(p-1)!&\equiv&-exp(3V_{p,1}-\frac{3}{2}V_{p,2}+\frac{1}{3}V_{p,3})\cdot  \nonumber
\\
&&exp\big((\frac{p-2}{2}B_{p-3}-\frac{2p-3}{2}B_{2p-4}+\frac{3p-4}{6}B_{3p-5})p^3\big)\ (mod\ p^4).
\nonumber
\end{eqnarray}
}
By (\ref{fmain2659xs}) and (\ref{fmain2659xs1}),
 \begin{equation}\label{fmain2659x2s}  (3V_{p,1}-\frac{3}{2}V_{p,2}+\frac{1}{3}V_{p,3})^2 \equiv (2V_{p,1}-\frac{1}{2}V_{p,2})^2\ (mod\ p^4),\end{equation}
 and
 \begin{equation}\label{fmain2659x22s}  (3V_{p,1}-\frac{3}{2}V_{p,2}+\frac{1}{3}V_{p,3})^3 \equiv V_{p,1}^3\ (mod\ p^4).\end{equation}
 Hence the first assertion follows. The second follows from the first and Corollary \ref{Cs1}.
\end{proof}

\begin{corollary} \label{Cs1vd} If $4|p-3$,
{\setlength{\arraycolsep}{0pt}
\begin{eqnarray}\label{fmaince2655}
2^{p-1}(\frac{p-1}{2})!&\equiv&(-1)^N\Big(1+\frac{3}{2}V_{p,1}-\frac{3}{4}V_{p,2}+\frac{1}{6}V_{p,3}+\frac{1}{8}(2V_{p,1}-\frac{1}{2}V_{p,2})^2+ \frac{1}{48}V_{p,1}^3 \nonumber
\\
&&-(\frac{13}{24}B_{p-3}-\frac{3}{4}B_{2p-4}+\frac{1}{3}B_{3p-5})p^3\Big) \ (mod\ p^4)
\nonumber
\end{eqnarray}
} where $N$ is the number of quadratic nonresidues less than $p/2$.
  \end{corollary}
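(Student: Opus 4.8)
The plan is to extract a $p$-adic square root from the second congruence of Theorem~\ref{mainx4}. Since $4\mid p-3$ forces $p\equiv 3\ (mod\ 4)$, the exponent $(p-1)/2$ is odd, so $(-1)^{(p-1)/2}=-1$ and that congruence becomes a congruence for $4^{p-1}(\tfrac{p-1}{2})!^{\,2}$ with the overall sign flipped. The key observation is that $4^{p-1}(\tfrac{p-1}{2})!^{\,2}=\big(2^{p-1}(\tfrac{p-1}{2})!\big)^{2}$ is a perfect square in $\mathbb{Z}_p^{*}$, so once I identify the right-hand side as the square of a unit I can read off $2^{p-1}(\tfrac{p-1}{2})!$ up to sign, and then fix the sign by reduction modulo $p$.

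First I would recast the (sign-flipped) right-hand side of Theorem~\ref{mainx4} in exponential form. Writing $\Lambda=3V_{p,1}-\tfrac32 V_{p,2}+\tfrac13 V_{p,3}$ and $Q=-\big(\tfrac{13}{12}B_{p-3}-\tfrac32 B_{2p-4}+\tfrac23 B_{3p-5}\big)p^{3}$, one expands $exp(\Lambda+Q)$ and uses that $\Lambda\equiv 0\ (mod\ p)$ by Lemma~\ref{lemma4h9} and $Q\equiv 0\ (mod\ p^{3})$, together with the identities (\ref{fmain2659x2s}) and (\ref{fmain2659x22s}) replacing $\Lambda^{2}$ by $(2V_{p,1}-\tfrac12 V_{p,2})^{2}$ and $\Lambda^{3}$ by $V_{p,1}^{3}$ modulo $p^{4}$, to check that
\[
4^{p-1}(\tfrac{p-1}{2})!^{\,2}\equiv exp(\Lambda+Q)\ (mod\ p^{4}).
\]
The canonical square root of $exp(\Lambda+Q)$ in $1+p\mathbb{Z}_p$ is $exp\big(\tfrac12(\Lambda+Q)\big)$.

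The step I expect to be the crux is the extraction of the square root together with the determination of the sign. Put $u=2^{p-1}(\tfrac{p-1}{2})!\in\mathbb{Z}_p^{*}$ and $s=(-1)^{N}exp\big(\tfrac12(\Lambda+Q)\big)$. From the previous display $u^{2}\equiv s^{2}\ (mod\ p^{4})$, so $p^{4}\mid (u-s)(u+s)$. Modulo $p$ one has $2^{p-1}\equiv 1$, and Dirichlet's classical congruence (valid precisely when $4\mid p-3$) gives $(\tfrac{p-1}{2})!\equiv(-1)^{N}$, whence $u\equiv(-1)^{N}$; since $exp\big(\tfrac12(\Lambda+Q)\big)\equiv 1\ (mod\ p)$ we also have $s\equiv(-1)^{N}\equiv u$. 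Hence $u+s\equiv 2(-1)^{N}$ is a unit (as $p$ is odd), and $p^{4}\mid(u-s)(u+s)$ forces $u\equiv s\ (mod\ p^{4})$. Thus the mod-$p$ input from Dirichlet selects the correct branch of the square root, and the unit factor $u+s$ is exactly what upgrades agreement from modulo $p$ to modulo $p^{4}$.

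It then remains to expand $s$. Since $\tfrac12(\Lambda+Q)\in p\mathbb{Z}_p$ with $\Lambda\equiv 0\ (mod\ p)$ and $Q\equiv 0\ (mod\ p^{3})$, the exponential series truncates and all cross terms involving $Q$ as well as the quartic term vanish modulo $p^{4}$, leaving
\[
exp\big(\tfrac12(\Lambda+Q)\big)\equiv 1+\tfrac12\Lambda+\tfrac12 Q+\tfrac18\Lambda^{2}+\tfrac{1}{48}\Lambda^{3}\ (mod\ p^{4}).
\]
Now $\tfrac12\Lambda=\tfrac32 V_{p,1}-\tfrac34 V_{p,2}+\tfrac16 V_{p,3}$ and $\tfrac12 Q=-\big(\tfrac{13}{24}B_{p-3}-\tfrac34 B_{2p-4}+\tfrac13 B_{3p-5}\big)p^{3}$, while (\ref{fmain2659x2s}) and (\ref{fmain2659x22s}) give $\tfrac18\Lambda^{2}\equiv\tfrac18(2V_{p,1}-\tfrac12 V_{p,2})^{2}$ and $\tfrac1{48}\Lambda^{3}\equiv\tfrac1{48}V_{p,1}^{3}$ modulo $p^{4}$. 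Multiplying by $(-1)^{N}$ reproduces the stated congruence; the only remaining verifications are the routine coefficient identifications, which are legitimate because $p>3$ makes $6,8,48$ invertible in $\mathbb{Z}_p$.
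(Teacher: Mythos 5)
Your proof is correct and matches the paper's intended derivation: the corollary is stated there without proof as an immediate consequence of the preceding theorem, and extracting the $p$-adic square root $exp\big(\tfrac12(\Lambda+Q)\big)$ of the sign-flipped congruence for $4^{p-1}(\tfrac{p-1}{2})!^2$, with the branch fixed by Dirichlet's mod $p$ congruence $(\tfrac{p-1}{2})!\equiv(-1)^N$ and the unit $u+s$ upgrading this to mod $p^4$, is exactly the step left to the reader. The resulting halved coefficients, including $\tfrac18(2V_{p,1}-\tfrac12V_{p,2})^2$ and $\tfrac1{48}V_{p,1}^3$ obtained via the identities for $\Lambda^2$ and $\Lambda^3$, all check out against the stated congruence.
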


We close the paper by noting that, when $4|p-1$, we can deduce a congruence for
$(\frac{p-1}{4})!^4$ from Theorems \ref{mainx41} and \ref{mainx4}.

\end{document}